\def\thm@space@setup{%
 \thm@preskip=0.1in
 \thm@postskip=0in
}
\numberwithin{equation}{section}
\theoremstyle{plain}
\newtheorem{thm}{Theorem}[section]
\newtheorem{lem}[thm]{Lemma}
\newtheorem{prop}[thm]{Proposition}
\theoremstyle{definition}
\newtheorem{defn}[thm]{Definition}
\theoremstyle{remark}
\newtheorem*{rem}{Remark}
\newtheorem*{example}{Example}
\DeclareMathOperator{\wt}{wt}
\DeclareMathOperator{\height}{ht}
\DeclareMathOperator{\fil}{fi}
\DeclareMathOperator{\one}{\mathbf{1}}
\DeclareMathOperator{\weight}{weight}
\DeclareMathOperator{\Prob}{Prob}
\DeclareMathOperator{\D}{D}
\DeclareMathOperator{\A}{A}
\DeclareMathOperator{\E}{E}
\DeclareMathOperator{\MCT}{MCT}
\begin{document}
\begin{center} {\Large{\sc Tableaux combinatorics of the two-species PASEP}} \\
\vspace{0.1in}
Olya Mandelshtam\footnote{\noindent University of California Berkeley, Phone: +1 (949) 689-5748, E-mail: olya@math.berkeley.edu} and Xavier Viennot\footnote{\noindent LaBRI, Universit\'{e} Bordeaux, Phone: +33 (0)5 4000-6085, 
E-mail: viennot@xavierviennot.org}
 \end{center}

\begin{abstract}
We study a two-species PASEP, in which there are two types of particles, ``heavy'' and ``light'', hopping right and left on a one-dimensional lattice of $n$ cells with open boundaries. In this process, only the ``heavy'' particles can enter on the left of the lattice and exit from the right of the lattice. In the bulk, any transition where a heavier particle type swaps places with an adjacent lighter particle type is possible. We generalize combinatorial results of Corteel and Williams for the ordinary PASEP by defining a combinatorial object which we call a rhombic alternative tableau that gives a combinatorial formula for the stationary probabilities for the states of this two-species PASEP.

\smallskip
\noindent \textbf{Keywords.} multispecies PASEP, alternative tableaux 
\end{abstract}

\section{Introduction}

The asymmetric exclusion process (ASEP) is a model of interacting particles introduced in biology and mathematics in the late 1960's \cite{MGP68, Spi70}. The particles may hop to the right and left in a finite lattice of cells and enter and exit at the boundaries of the lattice, with the exclusion condition which means there can be at most one particle in each cell. In the most general model, five parameters determine the hopping rates for the particles. In the bulk, the rate of hopping left is $q$ times the rate of hopping right. The four other parameters refer to the rates at which particles enter and exit at the left and right boundaries of the lattice. This model has attracted considerable attention in physics, and in particular is treated as a toy model in the study of dynamical systems far from thermal equilibrium. The ASEP can be solved completely in the general case (i.e.\ there are explicit formulas for the stationary probabilities) and it exhibits boundary-induced phase transitions. A seminal tool is the so-called Matrix Ansatz of Derrida, Evans, Hakim and Pasquier \cite{DEHP93}.

In the past ten years, much work has been done in combinatorics in relation to the ASEP. Most notably, the stationary probabilities can be interpreted as a weighted sum over certain combinatorial objects. The main starting point for the combinatorial interpretations was the ASEP with three parameters $\alpha$, $\beta$ and $q$, i.e.\ the partially asymmetric exclusion process (PASEP) (this is the same model as the ASEP, except that some parameters are set to 0). In this process, particles can enter the lattice on the left with probability proportional to $\alpha$ and exit at the right with probability proportional to $\beta$, and hop right or left inside the strip with probabilities proportional to 1 and $q$ respectively. In relation with the Matrix Ansatz \cite{DEHP93}, various authors have introduced several equivalent combinatorial objects such as \emph{permutation tableaux, alternative tableaux, staircase tableaux, and tree-like tableaux}, all enumerated by $n!$ and thus in bijection with permutations. In the case where $q=0$ (TASEP), the corresponding tableaux are enumerated by Catalan numbers. A very rich area of combinatorics has emerged from this topic. 

%
%

The PASEP has also been generalized to allow different species of particles, with some priority rules where particles can swap with particles of different species. Again, some combinatorial interpretations are starting to be discovered, using an extended Matrix Ansatz. In this paper we consider a simple two-species PASEP with three parameters $\alpha$, $\beta$ and $q$, analogous to the ordinary PASEP (see \cite{Uch07, ALS09, DS05}). In \cite{M16}, the first author studied the case of the two-species PASEP for $q=0$, and introduced an object called the ``multi-Catalan tableau'' that gives an interpretation for the steady state probabilities of the process for that special case. In this work, we generalize the result for all $q$ with a new object called the rhombic alternative tableaux. 

More specifically, the two-species PASEP has two species of particles, one heavy and one light. The ``heavy'' particle can enter the lattice on the left with rate $\alpha$, and exit the lattice on the right with rate $\beta$. Moreover, the ``heavy'' particle can swap places with both the ``hole'' and the ``light'' particle when they are adjacent, and similarly the ``light'' particle can swap places with the ``hole''. Each of these possible swaps occur at rate 1 when the heavier particle is to the left of the lighter one, and at rate $q$ when the heavier particle is to the right (we simplify our notation by treating the ``hole'' as a third type of ``particle''). More precisely, if we denote the ``heavy'' particle by $\D$, the ``light'' particle by $\A$, and the ``hole'' by $\E$, and let $X$ and $Y$ be any words in $\{\D, \A, \E\}$ then the transitions of this process are:
\begin{displaymath} X\A\E Y \overset{1}{\underset{q}{\rightleftharpoons}} X\E\A Y \qquad X\D\E Y \overset{1}{\underset{q}{\rightleftharpoons}} X\E\D Y \qquad X\D\A Y \overset{1}{\underset{q}{\rightleftharpoons}} X\A\D Y\end{displaymath}

\begin{displaymath}\E X \overset{\alpha}{\rightharpoonup} \D X \qquad \qquad X\D \overset{\beta}{\rightharpoonup} X\E\end{displaymath}
where by $X \overset{u}{\rightharpoonup} Y$ we mean that the transition from $X$ to $Y$ has probability $\frac{u}{n+1}$, $n$ being the length of $X$ (and also $Y$).

Notice that since only the ``heavy'' particle can enter or exit the lattice, the number of ``light'' particles must stay fixed. In particular, if we fix the number of ``light'' particles to be 0, we recover the original PASEP. 

Uchiyama provides a Matrix Ansatz along with matrices that satisfy the conditions, to express the stationary probabilities of the two-species PASEP as certain matrix products.

\begin{thm}[Uchiyama \cite{Uch07}]\label{ansatz}
Let $W = \{W_1,\ldots,W_n\}$ with $W_i \in \{\D, \A, \E\}$ for $1 \leq i \leq n$ represent a state of the two-species PASEP of length $n$ with $r$ ``light'' particles. Suppose there are matrices $D$, $E$, and $A$ and vectors $\langle w|$ and $|v \rangle$ which satisfy the following conditions
\[
DE = D+E+qED \qquad DA = A + qAD \qquad AE = A + qEA
\]
\[
\langle w| E = \frac{1}{\alpha} \langle w| \qquad D|v \rangle= \frac{1}{\beta} |v \rangle
\]
then 
\[
\Prob(W) = \frac{1}{Z_{n,r}} \langle w| \prod_{i=1}^n D\one_{(W_i=\D)} + A\one_{(W_i=\A)} + E\one_{(W_i=\E)}|v \rangle
\]
where $Z_{n,r}$ is the coefficient of $y^r$ in $\langle w| (D + yA + E)^n|v \rangle$.
\end{thm}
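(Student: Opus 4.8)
The plan is to check directly that the unnormalized weights
\[ f(\tau)=\langle w|\,C_{\tau_1}\cdots C_{\tau_n}\,|v\rangle,\qquad C_{\D}=D,\ C_{\A}=A,\ C_{\E}=E, \]
satisfy the master equations of the chain; since these are linear and homogeneous in $f$, stationarity of $f/Z_{n,r}$ follows automatically, and the shape of $Z_{n,r}$ is just partition-function bookkeeping inside the sector with $r$ light particles (the light particles being conserved, each such sector is its own irreducible component). Writing $\mathcal L^\ast$ for the map sending a weight to net inflow minus outflow at each state, I would decompose $\mathcal L^\ast=\sum_{i=1}^{n-1}h_{i,i+1}+b_{\mathrm{left}}+b_{\mathrm{right}}$ into the $n-1$ bulk bond operators and the two boundary operators (the common prefactor $1/(n+1)$ in every rate cancels and is dropped), and then show that $(\mathcal L^\ast f)(\tau)=0$ for every state $\tau$.

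For the bulk, a short case analysis of the three swap rules (forward rate $1$, backward rate $q$, heaviness order $\D>\A>\E$) shows that bond $(i,i+1)$ contributes $\langle w|C_{\tau_1}\cdots h(C_{\tau_i}C_{\tau_{i+1}})\cdots C_{\tau_n}|v\rangle$, where the two-site operator acts by $h(C_aC_b)=C_bC_a-q\,C_aC_b$ if $a$ is lighter than $b$, by $h(C_aC_b)=q\,C_bC_a-C_aC_b$ if $a$ is heavier, and by $0$ if $a=b$. The key step is to produce auxiliary matrices $\widehat C_a$ with
\[ h(C_aC_b)=\widehat C_a\,C_b-C_a\,\widehat C_b\qquad\text{for all letters }a,b. \]
I would look for scalar solutions $\widehat C_a=\lambda_a I$: feeding the three defining relations $DE-qED=D+E$, $DA-qAD=A$, $AE-qEA=A$ into the display forces $\lambda_{\D}=-1$, $\lambda_{\E}=+1$, $\lambda_{\A}=0$, and one checks that these same three numbers satisfy all six ordered-pair identities simultaneously. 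Writing $g_k:=\lambda_{\tau_k}\langle w|\prod_{j\neq k}C_{\tau_j}|v\rangle$, the scalar form collapses each bond contribution to $g_i-g_{i+1}$, so the bulk sum telescopes to $g_1-g_n$.

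It then remains to cancel $g_1-g_n$ against the boundaries. At the left the only rule is $\E\rightharpoonup\D$ at rate $\alpha$; using $\langle w|E=\tfrac1\alpha\langle w|$, the factor $\alpha$ cancels and $b_{\mathrm{left}}$ contributes $-\lambda_{\tau_1}\langle w|\prod_{j\geq2}C_{\tau_j}|v\rangle=-g_1$. At the right the only rule is $\D\rightharpoonup\E$ at rate $\beta$; using $D|v\rangle=\tfrac1\beta|v\rangle$, the factor $\beta$ cancels and $b_{\mathrm{right}}$ contributes $+\lambda_{\tau_n}\langle w|\prod_{j\leq n-1}C_{\tau_j}|v\rangle=+g_n$. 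Since $\A$ appears in neither boundary rule, both ends give $0$ when the extreme letter is $\A$, consistent with $\lambda_{\A}=0$. Adding the three pieces gives $(\mathcal L^\ast f)(\tau)=(g_1-g_n)+(-g_1)+g_n=0$, so $f$ is stationary; normalizing and extracting the $y^r$-coefficient of $\langle w|(D+yA+E)^n|v\rangle=\sum_\tau y^{r(\tau)}f(\tau)$, where $r(\tau)$ is the number of light particles, yields the stated $\Prob(\tau)$.

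The real obstacle is the middle step. In the ordinary PASEP there is a single bulk relation, whereas here three relations of two different shapes coexist: the $\D\E$ swap linearizes to the two-term $D+E$, while the $\D\A$ and $\A\E$ swaps each linearize to the single term $A$. One must verify that a single consistent choice of scalars $\lambda_a$ turns all of these (equivalently all six ordered pairs) into the telescoping form $\widehat C_aC_b-C_a\widehat C_b$ at once; this is exactly what pins down $\lambda_{\A}=0$ and encodes, algebraically, the conservation of light particles that renders the light species transparent to both boundaries. Once the $\lambda_a$ are in hand, the telescoping and the boundary cancellation are routine.
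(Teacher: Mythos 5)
The paper does not prove this statement: it is quoted from Uchiyama \cite{uchiyama}, and even the paper's own generalization (Theorem \ref{ansatz2}) is proved only by reference to Corteel--Williams \cite[Theorem 5.2]{cw2011}. So there is no in-paper argument to compare against; judged on its own, your proof is correct and is essentially the standard ``hat-matrix'' telescoping argument used in those references. I checked the six ordered-pair identities: with $\lambda_{\D}=-1$, $\lambda_{\A}=0$, $\lambda_{\E}=1$ one indeed gets $h(C_aC_b)=\lambda_aC_b-\lambda_bC_a$ in all cases (e.g.\ $qED-DE=-(D+E)=\lambda_{\D}E-\lambda_{\E}D$ and $DA-qAD=A=\lambda_{\A}D-\lambda_{\D}A$), the bulk sum telescopes to $g_1-g_n$, and the boundary computations with $\langle w|E=\tfrac1\alpha\langle w|$ and $D|v\rangle=\tfrac1\beta|v\rangle$ give $-g_1$ and $+g_n$ in each of the three cases for the extreme letter, so $(\mathcal L^\ast f)(\tau)=0$. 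Two small points worth making explicit: (i) to conclude that $f/Z_{n,r}$ \emph{is} the stationary distribution of the (finite, irreducible) sector with $r$ light particles, you need $f$ not identically zero and of constant sign, which requires knowing that matrices and vectors satisfying the relations exist with, say, nonnegative entries and $Z_{n,r}\neq 0$ --- this is supplied by Uchiyama's explicit representation and should be cited rather than assumed; (ii) the factor $\langle w|A^r|v\rangle$ appearing in the stated $Z_{n,r}$ cancels in the ratio $f(\tau)/\sum_{\tau'}f(\tau')$, so your bookkeeping via $\langle w|(D+yA+E)^n|v\rangle=\sum_\tau y^{r(\tau)}f(\tau)\cdot\langle w|A^{r(\tau)}|v\rangle^{0}$ is consistent with the statement once that cancellation is noted.
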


This result generalizes a previous Matrix Ansatz solution for the regular PASEP of Derrida et. al. in \cite{DEHP93}. Inspired by Uchimaya's Matrix Ansatz, we introduce in this paper rhombic alternative tableaux (RAT). They are defined in Section \ref{tat_def}, but we state our main theorem below.

\begin{thm}\label{mainresult}
Let $W$ be a state of the two-species PASEP of size $n$ with exactly $r$ ``light'' particles. Then the stationary probability of state $W$ is
\[
\Pr(W) = \frac{1}{\mathcal{Z}_{n,r}} \sum_{T} \wt(T)
\]
where $T$ ranges over the rhombic alternative tableaux corresponding to $W$, $\wt(T)$ is the weight of such a tableau, and $\mathcal{Z}_{n,r}$ is the weight generating function for the set of rhombic alternative tableaux corresponding to the state space of $W$.
\end{thm}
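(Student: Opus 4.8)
The plan is to verify that the combinatorial generating function $\sum_T \wt(T)$ satisfies exactly the same recursive/algebraic relations that Uchiyama's Matrix Ansatz (Theorem~\ref{ansatz}) imposes on the matrix products. By that theorem, the stationary probability of a state $W$ is determined, up to the normalization $Z_{n,r}$, by the matrix product obtained from reading $W$ letter by letter, subject to the three bulk relations $DE=D+E+qED$, $DA=A+qAD$, $AE=A+qEA$ together with the two boundary relations $\langle w|E=\frac1\alpha\langle w|$ and $D|v\rangle=\frac1\beta|v\rangle$. Since the Matrix Ansatz produces \emph{the} stationary distribution (it is unique for an irreducible finite Markov chain), it suffices to exhibit a single system satisfying these relations and show our tableaux compute the corresponding entries.

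Concretely, I would proceed in three stages. First, I would define three linear operators $\mathsf D,\mathsf A,\mathsf E$ acting on the vector space spanned by rhombic alternative tableaux (or on a formal weight-generating algebra indexed by them), where each operator corresponds to prepending or inserting the appropriate column/row type into a tableau. The vectors $\langle w|$ and $|v\rangle$ would be realized as the empty/initial tableau and the ``read-off weight'' functional, respectively. Second, I would establish the \textbf{bulk commutation relations} by a bijective argument: for each of the three identities, I would show that the tableaux counted by the left-hand side (those built by one operator composition) are in weight-preserving bijection with the disjoint union of the families counted on the right-hand side. This is the local insertion/transformation step, where adding a cell of one type to a tableau either produces a valid new tableau or splits into the prescribed combination with the correct powers of $q$ tracking the statistic $\wt$. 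Third, I would verify the two \textbf{boundary relations}, which amounts to checking that inserting an $E$-type object at the left (resp.\ a $D$-type at the right) rescales the weight functional by $1/\alpha$ (resp.\ $1/\beta$), matching the eigenvector conditions on $\langle w|$ and $|v\rangle$.

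Once these five relations hold at the level of the tableau operators, the Matrix Ansatz guarantees that for every state $W$ the quantity $\langle w|\prod_i(\cdots)|v\rangle$ equals $\sum_T \wt(T)$ where $T$ ranges over the rhombic alternative tableaux associated to $W$; dividing by the total weight $\mathcal Z_{n,r}=\sum_{W'}\sum_{T'}\wt(T')$ (the generating function over the whole state space with $r$ light particles) then yields exactly the asserted formula $\Pr(W)=\frac{1}{\mathcal Z_{n,r}}\sum_T\wt(T)$. The identification $\mathcal Z_{n,r}=Z_{n,r}$ follows because the normalization is forced to be the common denominator summing all unnormalized stationary weights to $1$.

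The main obstacle, I expect, will be the bulk relation $DE=D+E+qED$, since the right-hand side is an \emph{inhomogeneous} sum of three terms of differing degrees rather than a clean swap. Establishing the weight-preserving bijection here requires carefully defining how an adjacent $D$-then-$E$ configuration in a partially-built tableau decomposes, and ensuring the power of $q$ recorded by $\wt$ increments precisely on the $qED$ term; the $A$-relations should follow by an analogous but structurally simpler argument. Getting the statistic $\wt$ to align bijectively across the inhomogeneous terms—rather than merely matching cardinalities—is the delicate heart of the proof, and it is exactly where the geometry of the rhombic (rather than square) cells must be exploited.
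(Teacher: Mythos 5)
Your proposal follows essentially the same route as the paper: the authors verify that the tableau generating function satisfies the Matrix Ansatz relations via exactly the weight-preserving decompositions at adjacent $\D\E$, $\D\A$, $\A\E$ pairs that you describe (the corner tile holds $\alpha$, $\beta$, or $q$, and removing the corresponding north-strip, west-strip, or single tile realizes each term of the inhomogeneous relation), and then conclude by induction on the area of the rhombic diagram. The only adjustment to your plan is that the tableau weights satisfy the boundary conditions only up to an overall constant, so the paper invokes a rescaled ``sandwiched'' Matrix Ansatz (Theorem~\ref{ansatz2}, with $\lambda=\alpha\beta$) rather than Uchiyama's literal eigenvector relations $\langle w|E=\tfrac{1}{\alpha}\langle w|$ and $D|v\rangle=\tfrac{1}{\beta}|v\rangle$.
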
 

For the rest of the paper, in Section \ref{tat_def}, we introduce the rhombic alternative tableaux, and in Section \ref{ansatz_sec} we prove Theorem \ref{mainresult}. 
In Section \ref{q0_sec}, we conclude with a discussion of a special case for the rhombic alternative tableaux when $q=0$.

\section{Rhombic alternative tableaux}\label{tat_def}

\begin{wrapfigure}[4]{r}{0.08\textwidth}
\centering
\includegraphics[width=0.08\textwidth]{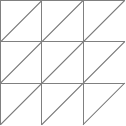}
\noindent
\label{lattice}
\end{wrapfigure}
The rhombic alternative tableaux (RAT) are an analog on a ``triangular lattice'' of the alternative tableaux \cite{slides} that correspond to the ordinary PASEP. By triangular lattice, we mean one which has as its vertices the integer points $(i,j)$, and the possible edges are the south edges with vertices $\{(i,j),(i,j-1)\}$, west edges with vertices $\{(i,j),(i-1,j)\}$, and southwest edges with vertices $\{(i,j),(i-1,j-1)\}$ for integers $i,j$, as in the figure on the right. 


\subsection{Definition of RAT}
\begin{defn}
Let $W$ be a word in the letters $\{\D, \A, \E\}$ with $k$ $\D$'s, $\ell$ $\E$'s, and $r$ $\A$'s of total length $n\vcentcolon= k+\ell+r$. Define $P_1$ to be the path obtained by reading $W$ from left to right and drawing a south edge for a $\D$, a west edge for an $\E$, and a southwest edge for an $\A$. (From here on, we call any south edge a D-edge, any west edge an E-edge, and any southwest edge an A-edge.) Define $P_2$ to be the path obtained by drawing $\ell$ west edges followed by $r$ southwest edges, followed by $k$ south edges. A \textbf{rhombic diagram} $\Gamma(W)$ of \textbf{type} $W$ is a closed shape on the triangular lattice that is identified with the region obtained by joining the northeast and southwest endpoints of the paths $P_1$ and $P_2$ (see Figure \ref{minimal}).
\end{defn}

\begin{figure}[h]
\begin{minipage}{0.4\textwidth}
\centering
\includegraphics[width=0.6\textwidth]{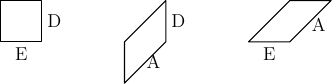}
\caption{The tiles DE, DA, and AE.}\label{tiles}
\end{minipage}\hfill
\begin{minipage}{0.6\textwidth}
\centering
\includegraphics[width=0.4\textwidth]{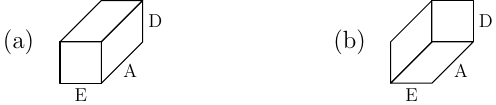}
\caption{(a) maximal and (b) minimal hexagons.}\label{hexagons}
\end{minipage}\hfill
\end{figure}

\begin{defn}
A \textbf{tiling} $\mathcal{T}$ of a rhombic diagram is a collection of open regions of the following three parallelogram shapes as seen in Figure \ref{tiles}, the closure of which covers the diagram:
\begin{itemize}
\item A parallelogram with south and west edges which we call a DE tile.
\item A parallelogram with southwest and west edges which we call an AE tile.
\item A parallelogram with south and southwest edges which we call a DA tile.
\end{itemize}
We define the \textbf{area} of a tiling to be the total number of tiles it contains.
\end{defn}

By convention, we label the E-edges of the southeast boundary of the rhombic diagram with 1 through $\ell$ from right to left, and the D-edges with 1 through $k$ from top to bottom.

\begin{defn}
A \textbf{north-strip} on a rhombic diagram with a tiling is a maximal strip of adjacent tiles of types DE or AE, where the edge of adjacency is always an E-edge. A \textbf{west-strip} is a maximal strip of adjacent tiles of types DE or DA, where the edge of adjacency is always a D-edge. The $i$'th north-strip is the north-strip whose bottom-most edge is the $i$'th (from right to left) E-edge on the boundary of the rhombic diagram. The $j$'th west-strip is the west-strip whose right-most edge is the $j$'th (from top to bottom) D-edge on the boundary of the rhombic diagram. Figure \ref{strips} shows an example of the west- and north-strips.
\end{defn}

Note that the number of tiles in the $i$'th north-strip is the total number of $\D$'s and $\A$'s in the word $W$ preceding the $i$'th $\E$. Similarly, the number of tiles in the $j$'th west-strip is the total number of $\E$'s and $\A$'s in the word $W$ following the $j$'th D. 


\begin{figure}[h]
\centering
\includegraphics[width=0.5\textwidth]{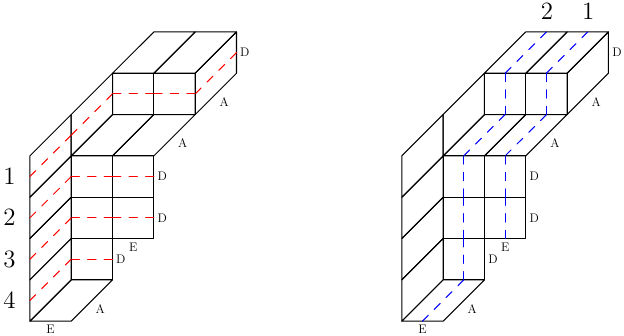}
\caption{\emph{Left)} west-strips and \emph{(right)} north-strips.}
\noindent
\label{strips}
\end{figure}

Finally we define the main object we are working with.

\begin{figure}
\begin{minipage}{0.48\textwidth}
\centering
\includegraphics[width=0.4\textwidth]{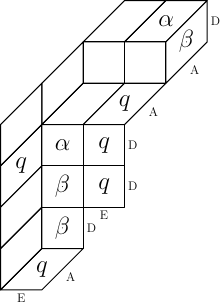}
\caption{An example of a RAT of size $(9,3,4)$ with type $\D\A\A\D\D\E\D\A\E$ and weight $\alpha^6\beta^5q^4$.}
\noindent
\label{RAT_example}
\end{minipage}\hfill
\begin{minipage}{0.48\textwidth}
\centering
\includegraphics[width=0.4\textwidth]{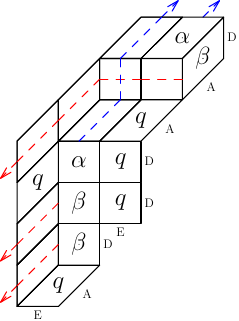}
\caption{A complete representation of a RAT that is equivalent to the example on the left.}
\noindent
\label{RAT_example1}
\end{minipage}\hfill
\end{figure}

\begin{defn}\label{RAT_def}
A RAT of \textbf{type} $W$ is a rhombic diagram $\Gamma(W)$ and an arbitrary tiling $\mathcal{T}$ with DE, DA, and AE tiles, and a filling $F$ of $\mathcal{T}$ with $\alpha$'s and $\beta$'s with the following conditions: 
\begin{enumerate}[i.]
\item A DE tile is empty or contains an $\alpha$ or a $\beta$.
\item A DA tile is empty or contains a $\beta$.
\item An AE tile is empty or contains an $\alpha$.
\item Any tile above and in the same north-strip as an $\alpha$ must be empty.
\item Any tile to the left and in the same west-strip as a $\beta$ must be empty.
\end{enumerate} 
\end{defn}

We define $\fil(W,\mathcal{T})$ to be the set of fillings of tiling $\mathcal{T}$ of the rhombic diagram $\Gamma(W)$. In other words, $F \in \fil(W,\mathcal{T})$ means $F$ is a filling of type $W$ of the tiling $\mathcal{T}$.

\begin{wrapfigure}[15]{r!}{0.42\textwidth}
\centering
\includegraphics[width=0.23\textwidth]{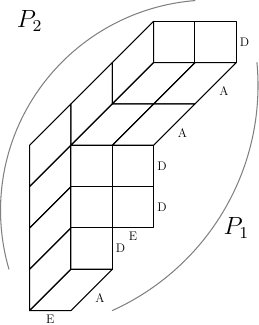}
\caption{The minimal tiling of a rhombic diagram $\Gamma(\D\A\A\E\E\D\E\A\E)$ defined by the paths $P_1$ and $P_2$.}
\noindent
\label{minimal}
\end{wrapfigure}

\begin{defn}
A \textbf{north line} is a line drawn through each north-strip containing an $\alpha$, starting at the tile directly above that $\alpha$. A \textbf{west line} is a line drawn through each west-strip containing a $\beta$, starting at the tile directly left of that $\beta$. An example of the RAT with the north- and west lines is shown in Figure \ref{RAT_example1}.
\end{defn}

In terms of the north- and west lines, we rewrite the conditions (iv) and (v) of Definition \ref{RAT_def} by (equivalently) requiring that any tile that contains a north line or a west line must be empty.

\begin{defn}
The \textbf{size} of a RAT of type $W$ is $(n,r,k)$, where $k$ is the number of $\D$'s in $W$, $r$ is the number of $\A$'s in $W$, and $n$ is the total number of letters in $W$. We can also call this the size of a filling $F$ of type $W$. We can also refer to the size of a tableau as simply $(n,r)$, where we do not keep track of the number of $\D$'s.
\end{defn}

\begin{defn}\label{weight}
To compute the \textbf{weight} $\wt(F)$ of a filling $F$, first a $q$ is placed in every empty tile that does not contain a north line or a west line. Next, $\wt(F)$ is the product of all the symbols inside $F$ times $\alpha^k \beta^{\ell}$, for $F$ a filling of size $(k+\ell+r,r,k)$.
\end{defn}

We will prove in Proposition \ref{equiv} that the sum of the weights of all fillings of $\Gamma(W)$ does not depend on the tiling $\mathcal{T}$.

\subsection{Independence of tilings and definition of $\weight(W)$}

\begin{prop}\label{equiv}
Let $W$ be a word in $\{\D, \A, \E\}$. Let $\mathcal{T}_1$ and $\mathcal{T}_2$ represent two different tilings of a rhombic diagram $\Gamma(W)$ with DE, DA, and AE tiles. Then 
\[
\sum_{F \in \fil(W,\mathcal{T}_1)} \wt(F)=\sum_{F' \in \fil(W,\mathcal{T}_2)} \wt(F').
\]
\end{prop}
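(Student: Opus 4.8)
The plan is to reduce this global statement to a single local move and then verify that move by a finite enumeration refined by boundary data. First I would observe that, because the three tile shapes are rhombi built from the $\D$-, $\E$- and $\A$-edge vectors (with the degeneracy that an A-edge is the diagonal of a DE tile), the tilings of a fixed rhombic diagram $\Gamma(W)$ are exactly the rhombus tilings of one simply connected region. By the standard theory of such tilings (height functions and connectivity of the flip graph), any two tilings are joined by a sequence of elementary \emph{flips}, each of which replaces the three tiles covering one hexagon by the three tiles covering it the opposite way --- precisely the exchange between the maximal and minimal hexagons of Figure \ref{hexagons}. A short direct check (analyzing the two ways to pair the six triangles around an interior vertex) shows these 3-tile hexagon moves are the only elementary flips, that each flip preserves the multiset $\{\text{DE},\text{DA},\text{AE}\}$ of the hexagon's tiles, and that the minimal tiling of Figure \ref{minimal} is the unique minimal element. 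Hence it suffices to show $\sum_{F}\wt(F)$ is unchanged by a single flip; transitivity then gives the proposition (and shows every tiling agrees with the minimal one).

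Next I would localize. Fix $\mathcal{T}_1,\mathcal{T}_2$ differing only inside one hexagon $H$. Outside $H$ the tilings are literally identical, so their tiles, north-strips and west-strips are canonically identified there, as is the prefactor $\alpha^k\beta^\ell$. The boundary $\partial H$ consists of two E-edges, two D-edges and two A-edges, so exactly one north-strip and exactly one west-strip thread through $H$ (the A-edges carry no strip); each contains two of the three tiles of $H$, the shared tile being the DE tile. The only coupling between the inside of $H$ and the outside occurs through these two strips, and it is recorded by four booleans: whether a north-line and a west-line \emph{enter} $H$ (i.e.\ an $\alpha$, resp.\ $\beta$, lies further along that strip outside $H$), and whether a north-line and a west-line \emph{exit} $H$ through its top E-edge, resp.\ left D-edge. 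Since $\partial H$ is the same for both tilings, these flags glue to the exterior filling identically, so grouping all fillings of $\Gamma(W)$ by the four flags factors each side as (an exterior sum depending only on the flags) times (a local sum over the fillings of the three tiles of $H$). It therefore suffices to prove: for each choice of the four flags, the weight-generating polynomial in $\alpha,\beta,q$ of the three tiles of $H$ agrees for $\mathcal{T}_1$ and $\mathcal{T}_2$.

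The geometric heart of the local identity is that the flip reverses the order of the shared DE tile within \emph{both} strips: in one tiling the DE tile is the top of the north-strip and the rightmost of the west-strip (the ``outer'' corner), while in the other it is the bottom of the north-strip and the leftmost of the west-strip (the ``inner'' corner), with the AE and DA tiles interchanged accordingly. Since north-lines propagate upward from an $\alpha$ and west-lines leftward from a $\beta$, this exchange reverses exactly the two forcing directions, and I would check by direct enumeration (at most $3\cdot 2\cdot 2$ fillings, cut down by the incoming-line constraints and conditions (i)--(v)) that the induced placement of $q$'s in un-lined empty tiles compensates precisely. For example, with no incoming lines both local sums equal $q^3+q^2\alpha+q^2\beta+q\alpha+q\beta+q\alpha\beta+\alpha\beta$, and the agreement persists after refining by the two exit flags.

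The main work is twofold. The first delicate point is justifying that the interface between $H$ and its exterior is \emph{exactly} this four-flag data and nothing more --- that reversing the intra-strip order inside $H$ produces no long-range effect beyond whether each line is emitted from the top or left of $H$ --- since this is what licenses the factorization above. The second is the honest flag-by-flag case analysis, to be carried out for all flag combinations (most of which are forced or vacuous once an incoming line is present, because every hexagon tile on that line must be empty). I expect the first of these, pinning down the precise local-to-global interface of the line conditions, to be the true obstacle; the enumeration itself is routine once that interface is isolated.
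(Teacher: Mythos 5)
Your proposal is correct and follows essentially the same route as the paper: reduce to a single hexagon flip via the flip-connectivity lemma, then verify a local identity on the three tiles of the hexagon while controlling the interface with the exterior through the north- and west-strips threading the hexagon. The paper packages the local step as an explicit weight-preserving bijection between hexagon fillings (Figure \ref{flip_map}) that preserves the ``compatibility'' data at the boundary E- and D-edges, which is precisely your four-flag factorization in bijective form.
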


\begin{wrapfigure}[5]{r}{0.4\textwidth}
\centering
\includegraphics[width=0.2\textwidth]{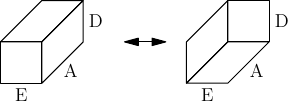}
\caption{A flip from a maximal (left) to a minimal hexagon (right).}
\noindent
\label{flip}
\end{wrapfigure}

\begin{defn} Consider a hexagon with vertices $\{(i,j), (i,j-1), (i-1,j-2), (i-2,j-2), (i-2,j-1), (i-1,j)\}$ for some integers $i,j$ that is tiled with a DE-, a DA-, and an AE tile. A \textbf{minimal hexagon} is when the tiles have the configuration of Figure \ref{hexagons} (b), and a \textbf{maximal hexagon} is when the tiles have the configuration of Figure \ref{hexagons} (a).
\end{defn}

\begin{defn}
Let $W$ be a word in $\{\D, \A, \E\}$. We define the \textbf{minimal tiling} of $\Gamma(W)$ to be the tiling that that does not contain an instance of a maximal hexagon, such as the example in Figure \ref{minimal}. We refer to such a tiling by $\mathcal{T}_{min}$\footnote{In the remark following the proof of Lemma \ref{flip_connectivity}, we show that $\mathcal{T}_{min}$ is the unique minimal tiling.}. $\mathcal{T}_{min}$ can be constructed by placing tiles from $P_1$ inwards, and placing an AE tile with first priority whenever possible.  (Similarly, a \textbf{maximal tiling} is one that that does not contain an instance of a minimal hexagon, and is referred to by $\mathcal{T}_{max}$. The maximal tiling can be constructed by placing tiles from $P_1$ inwards with priority given to the DA tiles whenever possible.)
\end{defn}

\begin{defn} A \textbf{flip} is an involution that switches between a maximal hexagon and a minimal hexagon, and is the particular rotation of tiles that is shown in Figure \ref{flip}.
\end{defn}

The lemma below contains a generally known result, notably in the case of a plane partition. 

\begin{lem}\label{flip_connectivity}
Let $\Gamma(W)$ be a rhombic diagram of type $W$. For any two tilings $\mathcal{T}$ and $\mathcal{S}$ of $\Gamma(W)$, $\mathcal{T}$ can be obtained from $\mathcal{S}$ by some series of flips.
\end{lem}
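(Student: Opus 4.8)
The plan is to prove that the flip graph on tilings of $\Gamma(W)$ is connected, and the natural strategy is to show that every tiling can be reduced to the distinguished minimal tiling $\mathcal{T}_{min}$ by a sequence of flips that each replace a maximal hexagon with a minimal one. Since a flip is an involution, connecting every tiling to $\mathcal{T}_{min}$ suffices: given two tilings $\mathcal{T}$ and $\mathcal{S}$, we flip each down to $\mathcal{T}_{min}$ and then reverse the flips from $\mathcal{S}$. So the entire burden reduces to the following one-directional claim: \emph{any tiling $\mathcal{T} \neq \mathcal{T}_{min}$ contains at least one maximal hexagon, and flipping it strictly decreases some nonnegative monovariant}, so that iterating terminates at $\mathcal{T}_{min}$.

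First I would set up the monovariant. The most robust choice in rhombic/lozenge tiling arguments is a height function: identify each tiling with a monotone surface (a stepped surface over the region $\Gamma(W)$), where the three tile types DE, DA, AE correspond to the three orientations of a unit rhombus projected from a stack of unit cubes. Under this correspondence a maximal hexagon at position $(i,j)$ is exactly a local configuration where a single cube can be removed, and a flip from maximal to minimal is removal of that cube; $\mathcal{T}_{min}$ is the surface with no removable cube, i.e.\ the pointwise-lowest admissible surface (and $\mathcal{T}_{max}$ the highest). The monovariant is then the total number of cubes (equivalently $\sum_{\text{tiles}} h(\text{tile})$ for the height contribution), which is a finite nonnegative integer strictly decreased by each maximal-to-minimal flip. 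The key structural step is to verify that the definition of \emph{maximal hexagon} via the three-tile configuration in Figure \ref{hexagons}(a) coincides exactly with ``locally removable cube,'' and that a tiling with no maximal hexagon is forced to be $\mathcal{T}_{min}$ — which is precisely the defining property of $\mathcal{T}_{min}$, so this direction is essentially bookkeeping once the height-function dictionary is fixed.

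The step I expect to be the main obstacle is establishing that any tiling $\mathcal{T}\neq\mathcal{T}_{min}$ \emph{must} contain a maximal hexagon (equivalently, that the only tiling with no maximal hexagon is $\mathcal{T}_{min}$, so that the monovariant cannot get stuck above its minimum). Abstractly this is the statement that a stepped surface with no removable cube is the minimal surface, which for simply connected planar regions is standard; the care required here is that $\Gamma(W)$ is a rhombic diagram with a specific, somewhat irregular boundary cut out by the paths $P_1$ and $P_2$, so I would need to confirm that this boundary is the ``fixed'' boundary of the height function (the heights on $\partial\Gamma(W)$ are determined by $W$ and are common to all tilings) and that no boundary effects create a tiling that is locally minimal but not globally equal to $\mathcal{T}_{min}$. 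I would handle this by arguing on the height function directly: if $h$ is not the pointwise minimum, pick an interior vertex where $h$ exceeds the minimal surface by a maximal amount; a local analysis at that vertex (using that all three incident tile types must appear in a way that, by maximality of the excess, forces the maximal-hexagon configuration) produces a removable cube, giving the required flip.

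Once both ingredients are in place, the proof concludes by a straightforward termination argument: starting from any $\mathcal{T}$, repeatedly flip a maximal hexagon down; the cube count strictly decreases and is bounded below, so the process halts at a tiling with no maximal hexagon, which is $\mathcal{T}_{min}$. Applying this to both $\mathcal{T}$ and $\mathcal{S}$ and concatenating one sequence with the reverse of the other (legal since flips are involutions) exhibits the desired series of flips from $\mathcal{S}$ to $\mathcal{T}$.
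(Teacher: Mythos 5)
Your proposal is correct and follows the same overall skeleton as the paper's proof: introduce a nonnegative integer monovariant (a height), show that every tiling other than $\mathcal{T}_{min}$ admits a flip strictly decreasing it, conclude that every tiling reaches the unique minimum, and connect $\mathcal{T}$ to $\mathcal{S}$ through $\mathcal{T}_{min}$ using that flips are involutions. The realization, however, is genuinely different. You work with the classical stepped-surface picture: tilings as monotone stacks of unit cubes, height defined at lattice vertices, a maximal hexagon as a removable cube, and the monovariant being the cube count; the hard step is then the standard "a surface with no removable cube is the pointwise-lowest surface," proved by a local analysis at a vertex of maximal excess. The paper instead bijects each tiling with a family of non-crossing lattice paths (the northwest boundaries of the west-strips, with AE tiles recorded by free lattice points), defines the height of a configuration as the total area between each path and its minimal counterpart, and shows that a \emph{slide} on a path at a suitable free lattice point corresponds exactly to a flip and decreases the area by one. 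The two approaches carry a comparable burden at the crux: your version must verify the no-removable-cube-implies-minimal claim against the irregular boundary of $\Gamma(W)$ (which is fine, since $\Gamma(W)$ is simply connected and the boundary heights are tiling-independent, but does need to be said), while the paper must verify that a configuration admitting no slide has every path equal to its minimal path --- a statement about individual paths that is arguably easier to check locally. What the paper's route buys in addition is the explicit bijection with non-crossing paths, which it exploits in a remark to connect tilings of $\Gamma(D^kA^rE^{\ell})$ with plane partitions in a box and MacMahon's formula; what your route buys is that it is the standard, fully general argument for rhombus tilings of simply connected regions and requires no region-specific path construction. Either way the lemma is established.
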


\vspace{0.2in}
\begin{rem}
To make the paper self-contained, we will give here a proof that tilings are in bijection with configurations of non-crossing paths, a well-known fact. Our proof defines a classical construction, in particular in the case of a plane partition where the bijection will give a configuration of paths related to a binomial determinant (by the LGV lemma), which can be expressed by a simple formula giving the well-known MacMahon formula for plane partitions (or 3D Ferrers diagrams) within a box of size $(a,b,c)$. We note that the case of a plane partition within a box of size $(k,r,\ell)$ is equivalent to the tilings of $\Gamma(W)$ where $W = D^kA^rE^{\ell}$.
\end{rem}

\begin{defn} A \textbf{non-crossing configuration} $\{\tau\}$ is a collection of lattice paths $\{\tau_1,\tau_2,\ldots\}$ where no two paths share a vertex (and hence are non-crossing).
\end{defn}

\begin{proof}[Proof of \ref{flip_connectivity}]
We prove the lemma by constructing a bijection from certain non-crossing configurations on $\Gamma(W)$ to tilings of $\Gamma(W)$. We define a height function $\height(\{\tau\})$ on the non-crossing configurations. We define the \emph{minimal} non-crossing configuration $\{\tau\}_{min}$ to be the one with height 0. We also define a local move called a \emph{slide} on $\{\tau\}$ which we show corresponds to a flip on $\mathcal{T}$ (similarly, a reverse slide corresponds to a flip in the other direction). We show that a slide diminishes the height of a non-crossing configuration by 1, and that any non-crossing configuration with height greater than 0 admits a slide. Thus it is possible to obtain $\{\tau\}_{min}$ from any $\{\tau\}$ by some set of slides, and so it is possible to obtain any $\{\sigma\}$ from any $\{\tau\}$ by some set of slides and reverse slides. This translates to the desired result due to the bijection.

\begin{figure}[h]
\centering
\includegraphics[width=\textwidth]{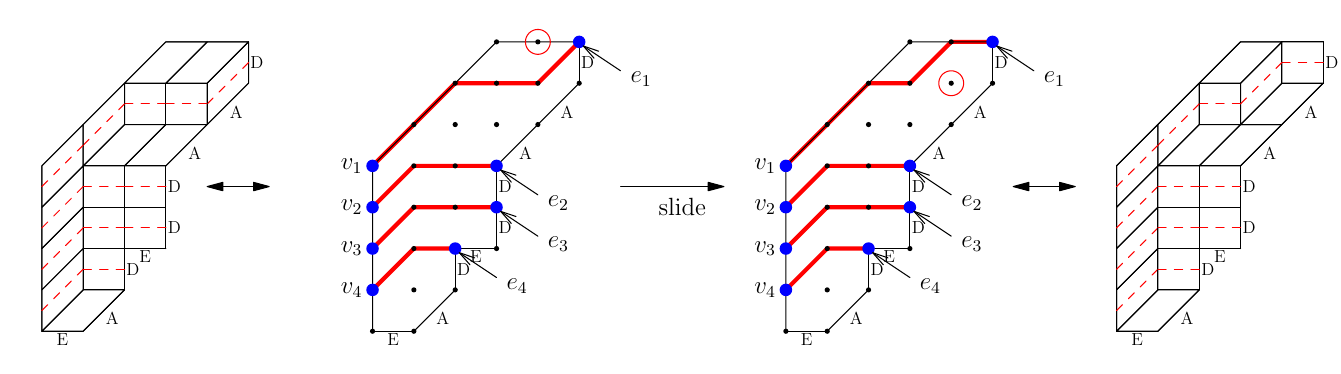}
\caption{\emph{(Left)} a tiling $\mathcal{T}$ with west-strips indicated, bijection to the non-crossing configuration $\{\tau\}$ with $\height(\{\tau\})=2$. \emph{(Right)} a slide performed at the indicated free lattice point to obtain the non-crossing configuration $\{\tau'\}$ with $\height(\{\tau'\})=1$, which is in bijection with a different tiling $\mathcal{T}'$ (also obtained from $\mathcal{T}$ by performing a flip at the northeast-most hexagon).}
\noindent
\label{slide}
\end{figure}

\noindent \textit{Bijection from tilings on $\Gamma(W)$ to non-crossing configurations on $\Gamma(W)$.} Recall that $\Gamma(W)$ is on a lattice with integer points $\{(i,j)\}$, and a tiling on $\Gamma(W)$ consists of tiles DE, DA, and AE whose corners are on those lattice points. We define a bijection from a tiling $\mathcal{T}$ to a non-crossing configuration $\{\tau\}$ on the lattice contained within $\Gamma(W)$. Let $\{\tau\} = \{\tau_i,\ldots,\tau_k\}$, where each $\tau_i$ is a lattice path consisting of west and southwest steps $(j,\ell)\rightarrow(j-1,\ell)$ and $(j,\ell)\rightarrow(j-1,\ell-1)$ respectively, that starts at $e_i$ and ends at $v_i$ for some sets $\{e_i\}$ and $\{v_i\}$. We let $e_i$ be the north endpoint of the $i$'th D-edge on the southeast boundary of $\Gamma(W)$ (from top to bottom), and let $v_i$ be the north endpoints of the $i$'th D-edge on the northwest boundary of $\Gamma(W)$ (from top to bottom). 

To obtain $\{\tau\}$ from $\mathcal{T}$, let $\tau_i$ be the path that coincides with the northwest boundary of the $i$'th west-strip (from top to bottom). Since the west-strips do not cross each other, it is clear that $\{\tau\}$ is well defined in this way (see Figure \ref{slide}).  

Define a \emph{free lattice point} to be a lattice point that is not part of any path in $\{\tau\}$, and does not lie on the southeast boundary of $\Gamma(W)$. To obtain $\mathcal{T}$ from $\{\tau\}$, we do the following: for each $\tau_i$, place a DE tile directly below and adjacent to each west step of $\tau_i$, and place a DA tile directly below and adjacent to each southwest step, so that the northwest boundaries of the DE and DA tiles coincide with the steps of $\tau_i$. Thus $\tau_i$ corresponds to a west-strip. Now, for every free lattice point that is not on the southeast boundary of $\Gamma(W)$, place an AE tile so that its northwest corner coincides with that lattice point. We claim that we obtain in this way a valid tiling of $\Gamma(W)$. To check this, we must simply verify that the construction above results in no tiles overlapping, and a complete covering of the shape. 

\noindent \textit{Height of $\{\tau\}$.}
For each $i$, define the \emph{minimal path} $m_i$ to be the one starting at $e_i$ and taking a maximal possible number of west steps followed by a maximal possible number of southwest steps to $v_i$. Define the \emph{height} of $\tau_i \in \{\tau\}$ (i.e.\ $\height(\tau_i)$) to be the area between $\tau_i$ and $m_i$ (i.e.\ the number of free lattice points strictly northwest of $\tau_i$ and weakly southeast of $m_i$). We define the height of $\{\tau\}$ to be
\[
\height(\{\tau\}) = \sum_{\tau_i \in \{\tau\}} \height(\tau_i).
\]
It is clear that there is a unique $\{\tau\}$ of height 0, by letting each $\tau_i$ be $m_i$. We call this $\{\tau_{min}\}$. 

\noindent \textit{A slide on $\{\tau\}$.}
Let $p$ be a free lattice point $(i,j)$ such that $(i+1,j)$, $(i,j-1)$, and $(i-1,j-1)$ are all not free lattice points. Then, those lattice points must necessarily belong to the same path in the non-crossing configuration $\{\tau\}$, say $\tau_i$. A \textbf{slide} on $\{\tau\}$ at the location of $p$ means exchanging the steps southwest and west for the steps west and southwest in $\tau_i$, and thereby passing through $p$ and creating a new free lattice point below $\tau_i$, as in Figure \ref{slide}. More precisely, the steps $(i+1,j)\rightarrow(i,j-1)\rightarrow(i-1,j-1)$ in $\tau_i$ are exchanged for $(i+1,j)\rightarrow(i,j)\rightarrow(i-1,j-1)$ to make $\tau'_i$. Clearly $\tau'_i$ does not cross $\{\tau_1,\ldots,\tau_{i-1},\tau_{i+1},\ldots\}$, and so the new collection of paths $\{\tau'\}$ formed by replacing $\tau_i$ with $\tau'_i$ is also a non-crossing configuration. Furthermore, since $\height(\tau_i') = \height(\tau_i)-1$, we have $\height(\{\tau'\}) = \height(\{\tau\})-1$.

We have already established that a southwest step of $\tau_i$ in $\{\tau\}$ corresponds to a DA box in the west-strip $i$, and a west step of $\tau_i$ corresponds to a DE box. A free lattice point necessarily corresponds to an AE tile, since all DA and DE tiles must be part of some west-strips. From Figure \ref{slide}, it is easy to see how a slide corresponds to a flip from a maximal hexagon to a minimal hexagon, and a reverse slide (the inverse operation) corresponds to the reverse flip.

Notice that if no such free lattice point $p$ exists such that its three neighbors east, south, and southwest are all not free lattice points, this implies $\tau_i=m_i$ for each $i$. Then $\{\tau\}$ is the minimal non-crossing configuration, as in Figure \ref{min_paths}. Thus $\{\tau\}$ admits no slides if and only if it equals $\{\tau_{min}\}$.

Now we complete the proof. Let $\{\tau\}$ be some non-crossing configuration with $\height(\{\tau\})=k>0$. Then by the above there is at least one free lattice point that admits a slide. After performing a slide at that location, we obtain a new non-crossing configuration $\{\tau'\}$ with $\height(\{\tau'\})=\height(\{\tau\})-1$ . Recursively, this implies that by applying some series of slides, we can get from $\{\tau\}$ to a non-crossing configuration with height 0. However, $\{\tau_{min}\}$ is the unique such non-crossing configuration, so we have shown that we can get from $\{\tau\}$ to $\{\tau_{min}\}$ with some set of slides. We can now equivalently define $\height(\{\tau\})$ as the minimal number of slides required to get from $\{\tau\}$ to $\{\tau_{min}\}$.

Let $\{\sigma\}$ be a different set of non-crossing configurations. There is similarly a set of slides to get from $\{\sigma\}$ to $\{\tau_{min}\}$. Thus we can get from $\{\tau\}$ to $\{\sigma\}$ by a series of slides, by first applying the slides to get from $\{\tau\}$ to $\{\tau_{min}\}$, and then by applying slides in reverse to get from $\{\tau_{min}\}$ to $\{\sigma\}$. Let the tiling $\mathcal{T}$ correspond to the non-crossing configuration $\{\tau\}$, and the tiling $\mathcal{S}$ to the non-crossing configuration $\{\sigma\}$. Since the slides on the paths correspond to flips on the tilings, we obtain that one can get from $\mathcal{T}$ to $\mathcal{S}$ with a series of flips, as desired. 
\end{proof}

\begin{rem} 
It is easy to check that $\{\tau_{min}\}$ corresponds to the minimal tiling $\mathcal{T}_{min}$ of $\Gamma(W)$ as in Figure \ref{min_paths}, since $\mathcal{T}_{min}$ is a tiling that admits zero flips from a maximal hexagon to a minimal hexagon, which corresponds to a non-crossing configuration admitting zero slides and having height 0. Since $\{\tau_{min}\}$ is the unique non-crossing configuration of height 0, $\mathcal{T}_{min}$ must be the unique minimal tiling according to our definition of ``minimal''. (The maximal tiling $\mathcal{T}_{max}$ is also unique by a similar argument.) Furthermore, the minimal number of flips required to get from $\mathcal{T}$ to $\mathcal{T}_{min}$ is commonly referred to as the height of a tiling $\mathcal{T}$.
\end{rem}

\begin{wrapfigure}[10]{r!}{0.5\textwidth}
\centering
\includegraphics[width=0.38\textwidth]{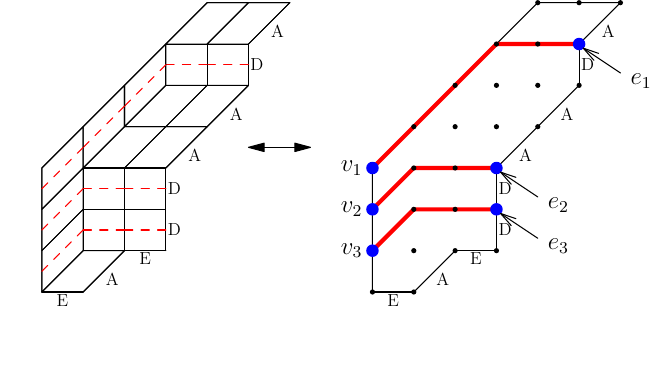}
\caption{$\mathcal{T}_{min}$ and $\{\tau_{min}\}$.}
\noindent
\label{min_paths}
\end{wrapfigure}


For the proof of Lemma \ref{equiv}, we introduce a more explicit set of tiles, where tiles can now contain $\alpha$, $\beta$, $q$, a north line, a west line, or both a north line and a west line, as in Figure \ref{tiles2}. We can now describe a \emph{complete} covering of $S$ with the following.

\begin{figure}[t]
\centering
\includegraphics[width=\textwidth]{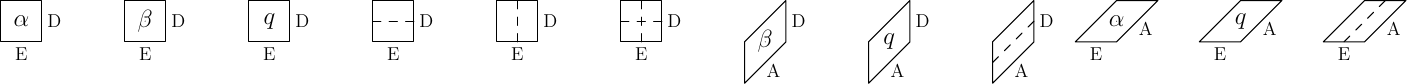}
\caption{A more explicit set of tiles that is in simple correspondence with the RAT fillings.}
\noindent
\label{tiles2}
\end{figure}

\begin{defn}\label{compatibility}
Two adjacent tiles are \textbf{compatible} if: 
\begin{enumerate}
\item a tile has a north line through it if and only if its south E-edge is adjacent to a tile containing an $\alpha$ or a north line, and
\item a tile has a west line through it if and only if its east D-edge is adjacent to a tile containing a $\beta$ or a west line.
\end{enumerate}
\end{defn}

\begin{proof}[Proof of \ref{equiv}]
For any rhombic diagram $\Gamma(W)$, any two tilings $\mathcal{T}_1$ and $\mathcal{T}_2$ can be obtained from each other by some series of flips by Lemma \ref{flip_connectivity}. Thus it is sufficient to show that if $\mathcal{T}_1$ and $\mathcal{T}_2$ differ by a single flip, then there is a weight-preserving bijection between $\fil(W,\mathcal{T}_1)$ and $\fil(W,\mathcal{T}_2)$. Let this flip occur at a certain ``special hexagon'' ($\mathfrak{h}_1$ in $\mathcal{T}_1$ and $\mathfrak{h}_2$ in $\mathcal{T}_2$). Without loss of generality, let $\mathfrak{h}_1$ be of minimal type as on the left of Figure \ref{flip}, and let $\mathfrak{h}_2$ be of maximal type as on the right of Figure \ref{flip}. The rest of the tiles are identical in $\mathcal{T}_1$ and $\mathcal{T}_2$. 

We define the bijection from $F \in \fil(W,\mathcal{T}_1)$ to some $F' \in \fil(W,\mathcal{T}_2)$ with an involution $\phi$. To begin, $\phi$ sends every tile including its contents in $\mathcal{T}_1\backslash\mathfrak{h}_1$ to its identical copy in $\mathcal{T}_2\backslash\mathfrak{h}_2$. Then, $\phi$ sends the tiles and contents of $\mathfrak{h}_1$ to a rearrangement of those tiles according to the cases shown in Figure \ref{flip_map}. It is easy to see that this map preserves the weights of the fillings, since the quantities of $\beta$s, $\alpha$'s, and $q$'s are preserved for each case. 

We claim that the map $\phi$ also preserves the compatibility of the tiles, as defined in Definition \ref{compatibility}. It is easy to check that in each possible case of $\mathfrak{h}_1$, the tile adjacent to the south E-edge contains an $\alpha$ or a north line if and only if the tile adjacent to the south E-edge of $\phi(\mathfrak{h}_1)$ contains an $\alpha$ or a north line. Similarly, the tile adjacent to the east D-edge of $\mathfrak{h}_1$ contains a $\beta$ or a west line if and only if the tile adjacent to the east D-edge of $\phi(\mathfrak{h}_1)$ contains a $\beta$ or a west line.

Thus $\phi$ indeed gives a weight-preserving bijection from $\fil(W,\mathcal{T}_1)$ to $\fil(W,\mathcal{T}_2)$, and so the lemma follows.
\end{proof}

Thus we are able to make the following definition.

\begin{defn}\label{weight_word}
Let $W$ be a word in $\{\D, \A, \E\}$, and let $\mathcal{T}$ be an arbitrary tiling of $\Gamma(W)$. Then the \textbf{weight} of a word $W$ is 
\[
\weight(W)=\sum_{F \in \fil(W,\mathcal{T})} \wt(F).
\]
\end{defn}

\begin{figure}[h]
\centering
\includegraphics[width=\textwidth]{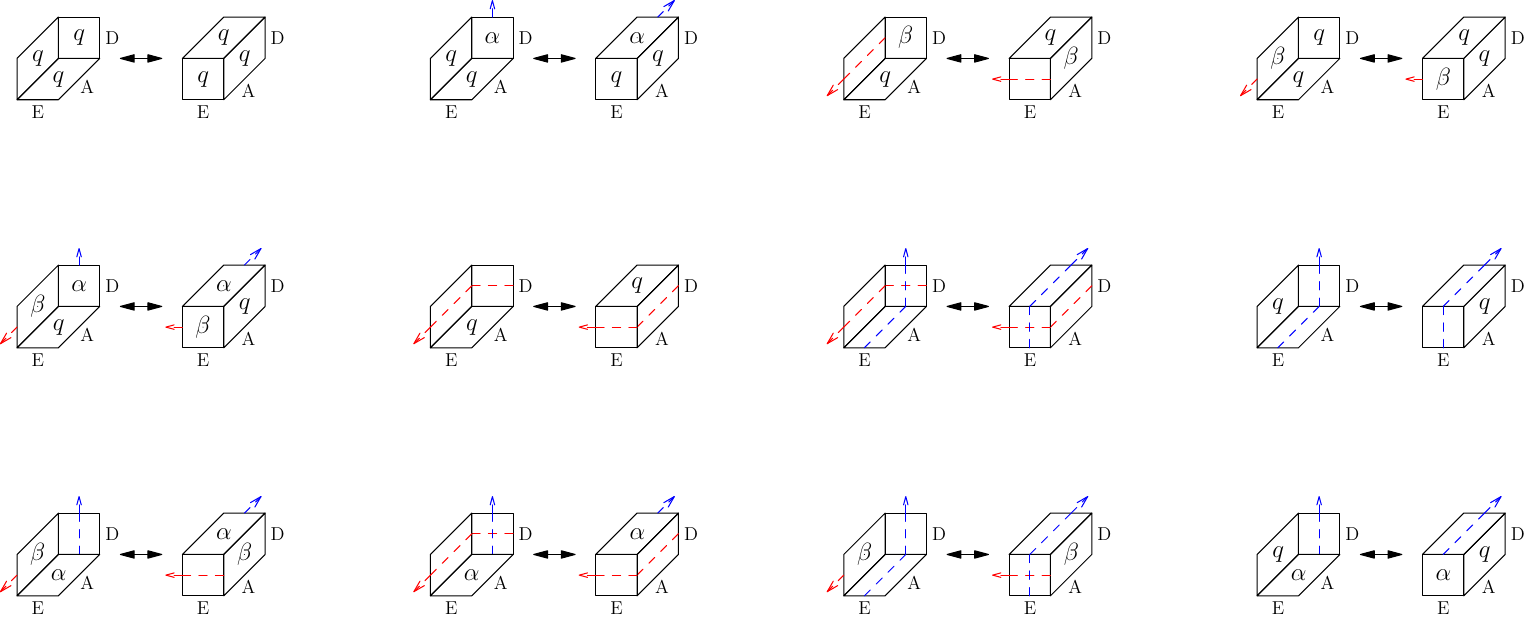}
\caption{The involution $\phi$ from each possible filling of a minimal hexagon (left) to a maximal hexagon (right). The arrows imply compatibility requirements.}
\noindent
\label{flip_map}
\end{figure}

In fact, we can define equivalence classes of rhombic alternative tableaux with the following definitions.

\begin{defn}
A \textbf{weight-preserving flip} on a RAT with tiling $\mathcal{T}$ is the transformation $\phi$ (or the inverse of the $\phi$) given by Figure \ref{flip_map} on some hexagon $\mathfrak{h}$ of $\mathcal{T}$ and the symbols contained in it, while preserving the filling of $\mathcal{T}\backslash\mathfrak{h}$.
\end{defn}

\begin{defn}
Let $W$ be a word in $\{\D, \A, \E\}$, and let $T_1 \in \fil(W,\mathcal{T}_1)$ and $T_2 \in \fil(W,\mathcal{T}_2)$ be rhombic alternative tableaux of type $W$ for arbitrary tilings $\mathcal{T}_1$ and $\mathcal{T}_2$ of $\Gamma(W)$. Then $T_1$ and $T_2$ are \textbf{equivalent} if and only if $T_2$ can be obtained from $T_1$ by a series of weight-preserving flips.
\end{defn}

\subsection{Enumerative considerations}

In this subsection, we show that the number of RAT equivalence classes of size $(n,r)$ is ${n \choose r}\frac{(n+1)!}{(r+1)!}$.

\begin{defn}
Define 
\[
\mathcal{Z}_{n,r}(\alpha,\beta,q) = \sum_W \weight(W)
\]
for $W$ ranging over all words in $\{\D, \A, \E\}^n$ with $r$ $\A$'s.
\end{defn}

By convention, let $\mathcal{Z}_{n,n}(\alpha,\beta,q) = 1$.

\begin{thm} 
\begin{equation}\label{partition_fn}
\mathcal{Z}_{n,r}(\alpha,\beta,1) = {n \choose r} \prod_{i=r}^{n-1} (\alpha+\beta+i\alpha\beta).
\end{equation} 
\end{thm}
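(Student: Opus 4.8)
The plan is to first exploit the specialization $q=1$ to strip the weight down to $\wt(F)=\alpha^k\beta^{\ell}\,\alpha^{\#\alpha(F)}\beta^{\#\beta(F)}$, since every empty tile (and every tile carrying only a line) now contributes a factor $1$. By Proposition \ref{equiv} I may compute $\weight(W)$ using whichever tiling of $\Gamma(W)$ is most convenient, and I will repeatedly use the two structural facts forced by conditions (iv)--(v): each north-strip carries at most one $\alpha$ (necessarily its top-most filled tile) and each west-strip carries at most one $\beta$ (necessarily its left-most filled tile). Writing $c_i:=\alpha+\beta+i\,\alpha\beta$, the goal becomes the identity $\mathcal{Z}_{n,r}(\alpha,\beta,1)=\binom{n}{r}\prod_{i=r}^{n-1}c_i$.

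The factor $\binom{n}{r}$ should come entirely from the placement of the $\A$'s. For a fixed subset $S\subseteq\{1,\dots,n\}$ of size $r$ (the positions of the $\A$'s) let $F(S)$ denote the sum of $\wt$ over all words with $\A$'s exactly on $S$ and all $\D/\E$ choices on the complement, evaluated at $q=1$. I would first prove that $F(S)$ depends only on $(n,r)$ and not on $S$. The key is a local commutation: swapping an $\A$ with an adjacent (summed-over) $\D/\E$ slot leaves the sum unchanged -- e.g.\ the pair of words $\{\A\D,\A\E\}$ and the pair $\{\D\A,\E\A\}$ both contribute $\alpha+\beta+\alpha\beta$. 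Establishing this at the level of the two small rhombic diagrams that differ by such a swap (a weight-preserving bijection on their fillings, in the spirit of the flip bijection of Proposition \ref{equiv}) lets me slide all $\A$'s to the front, so that $\mathcal{Z}_{n,r}=\binom{n}{r}\,F(S_0)$ with $S_0=\{1,\dots,r\}$ and $W=\A^rU$, $U\in\{\D,\E\}^{n-r}$.

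It remains to evaluate $F(S_0)=\sum_{U\in\{\D,\E\}^{n-r}}\weight(\A^rU)\big|_{q=1}$ and show it equals $\prod_{i=r}^{n-1}c_i$. I would do this by induction on the number $m=n-r$ of $\D/\E$ letters, peeling off the last letter of $U$: appending one more non-$\A$ letter at the end and summing over its two values multiplies the total by exactly $c_{\,r+m-1}=\alpha+\beta+(r+m-1)\alpha\beta$. Intuitively the newly created strip either starts fresh with an $\alpha$ or a $\beta$, or it interacts through a shared DE tile with one of the preceding letters, each such interaction contributing $\alpha\beta$; crucially the $r$ leading $\A$'s enlarge every new strip by $r$ (the north-strip of a new $\E$ counts all preceding $\D$'s \emph{and} $\A$'s), which is exactly what shifts the product so that it begins at $i=r$ rather than $i=0$. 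The base case $U=\varnothing$ gives the empty product $1$, and the recursion then yields $\prod_{i=r}^{n-1}c_i$; multiplying by $\binom{n}{r}$ finishes the proof. (The case $r=0$ is the ordinary alternative-tableaux identity $\mathcal{Z}_{n,0}=\prod_{i=0}^{n-1}c_i$.)

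The main obstacle is this per-step factor $c_i$. Unlike the border contribution $\alpha^k\beta^\ell$, it is emphatically \emph{not} a per-word identity: appending a letter to a single fixed $U$ does not multiply its weight by $c_i$, and the clean factor appears only after summing over all $\D/\E$ assignments. Proving it therefore requires a genuine analysis of the fillings of the adjoined strip -- tracking how the north-line/west-line blocking (conditions (iv)--(v), equivalently the compatibility of Definition \ref{compatibility}) couples the single new $\alpha$ or $\beta$ to the $\alpha$'s and $\beta$'s already present through the shared DE tiles -- together with a weight-preserving bijection (again using tiling-independence) that identifies the ``non-interacting'' part of the sum with $c_i$ times the fillings of the smaller diagram. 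Establishing the commutation lemma of the second paragraph is the secondary technical point; both are most cleanly handled by choosing the minimal (or maximal) tiling, so that the adjoined strip and the swapped tiles have a completely explicit local shape.
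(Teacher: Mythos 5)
Your first step (extracting the factor $\binom{n}{r}$ by commuting an $\A$ past an adjacent summed-over $\D/\E$ slot) is sound and is genuinely different from the paper, which instead gets the binomial coefficient from Pascal's rule inside its induction. Indeed, the corner-removal recurrences used in the proof of Theorem \ref{proba} give, at $q=1$, $\weight(X\A\E Y)-\weight(X\E\A Y)=\alpha\beta\,\weight(X\A Y)=\weight(X\D\A Y)-\weight(X\A\D Y)$, hence $\weight(X\A\D Y)+\weight(X\A\E Y)=\weight(X\D\A Y)+\weight(X\E\A Y)$, and your reduction to $\mathcal{Z}_{n,r}(\alpha,\beta,1)=\binom{n}{r}\sum_{U\in\{\D,\E\}^{n-r}}\weight(\A^rU)$ follows.

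The gap is in the second step. The claim that appending one more $\D/\E$ letter (summed over both values) ``multiplies the total by $c_{r+m-1}$'' is true only as a corollary of the final formula; it is not an identity you can establish by induction on the totals alone, and your proposal never supplies the mechanism. Concretely, let $G_m(x)=\sum_{U,T}\wt(T)\,x^{j(T)}$ where $j(T)$ is the number of west-strips of $T$ not containing a $\beta$ (the paper's refined statistic $Z_{n,r,k}$). Appending a $\D$ contributes $x\alpha\,G_m(x)$, while appending an $\E$ adds a column whose filling generating function for a fixed $T$ is $\beta(1+r\alpha)(1+\beta)^{j(T)}+\alpha\bigl((1+\beta)^{j(T)}-1\bigr)$; summing the two cases at $x=1$ gives the true recursion $G_{m+1}(1)=(\alpha+\beta+r\alpha\beta)\,G_m(1+\beta)$, \emph{not} $c_{r+m}\,G_m(1)$. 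The value $G_m(1+\beta)$ is not determined by $G_m(1)$: two families of tableaux with the same total weight but different distributions of $\beta$-free west-strips would propagate differently. So the induction cannot close without carrying the full polynomial $G_m(x)$ (equivalently the refined numbers $Z_{n,r,k}$) and verifying that the product $\prod_{i=r}^{r+m-1}(x\alpha+\beta+i\alpha\beta)$ is stable under $x\mapsto x+\beta$ followed by multiplication by $(x\alpha+\beta+r\alpha\beta)$ --- which is exactly the paper's argument. Your fallback, a direct weight-preserving bijection realizing the factor $c_i$, is precisely the bijective proof that the paper explicitly leaves open after its own proof, so it cannot be waved at as a routine step. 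To repair your proof, keep your commutation reduction but replace the ``multiply by $c_{r+m-1}$'' step with the catalytic recursion $G_{m+1}(x)=(x\alpha+\beta+r\alpha\beta)\,G_m(x+\beta)$, $G_0(x)=1$, which yields $G_m(x)=\prod_{i=r}^{r+m-1}(x\alpha+\beta+i\alpha\beta)$ and hence the theorem at $x=1$.
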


\begin{proof}
Let $Z_{n,r,k}(\alpha,\beta,q)$ be the weight generating function for the RAT (with the maximal tiling) with exactly $k$ west-strips that do not contain a $\beta$. 

We also define
\begin{equation}\label{aux_fn}
Z_{n,r}(x) = \sum_{k \geq 0} Z_{n,r,k}(\alpha,\beta,1) x^k
\end{equation}
with $\mathcal{Z}_{n,r}(\alpha,\beta,1) = Z_{n,r}(1)$. We claim that 
\begin{equation}\label{x_partition_fn}
Z_{n,r}(x) = {n \choose r} \prod_{i=r}^{n-1}(x\alpha+\beta+i\alpha\beta).
\end{equation}

We will prove Equation \eqref{x_partition_fn} and hence Equation \eqref{partition_fn} by induction on $n$ in terms of the more refined $Z_{n,r,k}$'s.

First, when $n=1$, $Z_{1,0}(x) = x\alpha+\beta$ and $Z_{1,1}=1$ by convention.

Now, we suppose that Equation \eqref{x_partition_fn} holds for any $N \leq n$ and any $r \leq n-1$. (Again, by convention $Z_{n,n}=1$ for all $n$.) We will show that the formula holds as well for $N=n+1$, for all $r \leq n$.

We begin by observing that Equation \eqref{x_partition_fn} satisfies
\begin{equation}\label{Z_recursion}
Z_{n+1,r}(x) = (x\alpha+\beta+r\alpha\beta) Z_{n,r}(x+\beta) + Z_{n,r-1}(x+\beta).
\end{equation}
We now construct a recursion for $Z_{n+1,r,k}$ in terms of the functions $\{Z_{n,r',k'}\}$ by keeping track of the terms after the addition of a new letter $\D$, $\A$, or $\E$ to the end of a word $W \in \{\D, \A, \E\}^n$. For the following, we denote by $T'$ a tableau in $\fil(W,\mathcal{T}_{max}(W))$ (for $\mathcal{T}_{max}(W)$ the maximal tiling of $\Gamma(W)$), and by $T$ a tableau in $\fil(W x,\mathcal{T}_{max}(W x))$ for $x \in \{\D, \A, \E\}$. We consider all possible cases for $W$ and corresponding $T'$ such that the resulting $T \in \fil(W x,\mathcal{T}_{max}(W x))$ has size $(n+1,r)$ and exactly $k$ west-strips that do not contain a $\beta$. 

\begin{figure}
\centering
\includegraphics[width=0.6\textwidth]{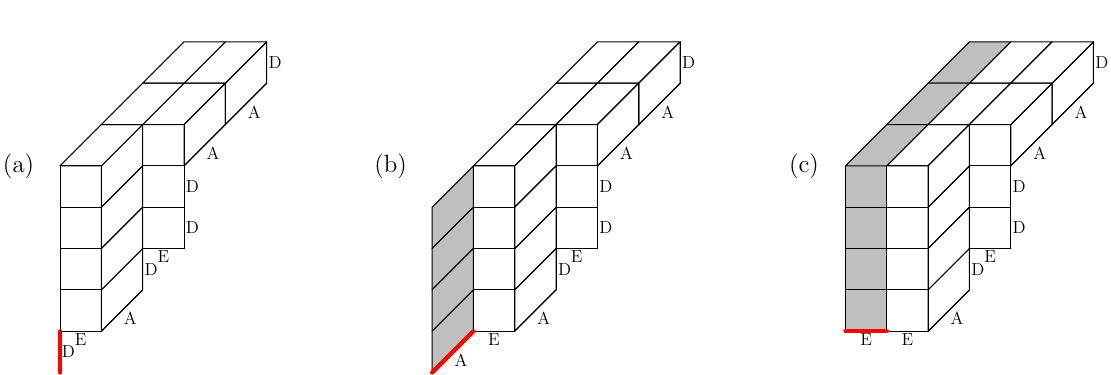}
\caption{Adding a (a) $\D$, (b) $\A$, or (c) $\E$ to the end of $W$.}
\noindent
\label{Zn}
\end{figure}

\begin{enumerate}
\item {\it We add a $\D$ to the end of a PASEP word $W$ of length $n$ with $r$ $\A$'s.} On the tableau level, this corresponds simply to the addition of a D-edge to the southwest end of each $T' \in \fil(W,\mathcal{T}_{max}(W))$ as in Figure \ref{Zn} (a). Since the filling of $T$ is the same as that of $T'$,
\begin{equation}\label{eq_D}
Z_{n+1,r,k} = \alpha Z_{n,r,k-1}.
\end{equation}
This contributes to $\sum_{T \in \fil(W\D,\mathcal{T}_{max}(W\D))}\wt(T)|_{(q=1)}$.

\item {\it We add an $\A$ to the end of a PASEP word $W$ of length $n$ with $r-1$ $\A$'s.} On the tableau level, this corresponds to the addition of a vertical column of some DA tiles to the left boundary of $T'$ to form a tableau with the maximal tiling of $\Gamma(W\A)$ as in Figure \ref{Zn} (b). Suppose $T'$ has $\ell \geq k$ west-strips that do not contain a $\beta$. Then to obtain $T$ with exactly $k$ west-strips that do not contain $\beta$, the $\ell-k$ DA tiles that \emph{do} contain a $\beta$ can be chosen in ${\ell \choose k}$ ways, with the other $k$ DA tiles containing $q$. Therefore, we obtain 
\begin{equation}\label{eq_A}
Z_{n+1,r,k}(\alpha,\beta,1) = \sum_{\ell \geq k} {\ell \choose k} \beta^{\ell-k} Z_{n,r-1,\ell}.
\end{equation}
This contributes to $\sum_{T \in \fil(W\A,\mathcal{T}_{max}(W\A))}\wt(T)|_{(q=1)}$.

\item {\it We add an $\E$ to the end of a PASEP word $W$ of length $n$ with $r$ $\A$'s.} On the tableau level, this corresponds to the addition of a vertical column of some DE tiles followed by a strip of $r$ AE tiles to the left boundary of $T'$ to form a tableau with the maximal tiling of $\Gamma(W\E)$ as in Figure \ref{Zn} (c). Suppose $T'$ has $\ell \geq k$ west-strips that do not contain a $\beta$. We have two cases. 

\begin{enumerate}
\item[(1)] For the first case, there is no $\alpha$ in the newly added DE tiles. Then to obtain $T$ with exactly $k$ west-strips that do not contain $\beta$, the $\ell-k$ DE tiles that \emph{do} contain a $\beta$ can be chosen in ${\ell \choose k}$ ways, with the other $k$ DE tiles containing $q$. Following this, the AE tiles can either contain all $q$'s, or some consecutive string of $q$'s followed by an $\alpha$.

\item[(2)] For the second case, there is an $\alpha$ in the newly added DE tiles, with some $\ell-k \leq u \leq \ell-1$ ``free'' DE tiles below it. (A ``free'' tile means the tile lies in a west-strip that does not contain a $\beta$.) Then, to obtain $T$ with exactly $k$ west-strips that do not contain $\beta$, the $\ell-k$ DE tiles that \emph{do} contain a $\beta$ can be chosen in ${u \choose \ell-k}$ ways, with the other $u-(\ell-k)$ DE tiles that lie below the $\alpha$ containing $q$, and the rest of the tiles empty. 
\end{enumerate}
Combining the above two cases, we obtain
\begin{equation}\label{eq_E}
Z_{n+1,r,k}(\alpha,\beta,1) = \beta \sum_{\ell \geq k} {\ell \choose k} \beta^{\ell-k} (r\alpha+1) Z_{n,r,\ell} + \sum_{u = \ell-k}^{\ell-1} {u \choose \ell-k} \alpha\beta^{\ell-k} Z_{n,r,\ell}.
\end{equation}
This contributes to $\sum_{T \in \fil(W\E,\mathcal{T}_{max}(W\E))}\wt(T)|_{(q=1)}$.
\end{enumerate}

Combining Equations \eqref{eq_D}, \eqref{eq_A}, and \eqref{eq_E} and summing over $k$, we obtain 
\begin{align*}
Z_{n+1,r}(x) &= \sum_{k \geq 0} \left( x\alpha Z_{n,r,k-1} + \sum_{\ell \geq k} {\ell \choose k} \beta^{\ell-k}x^k Z_{n,r-1,\ell} + \beta \sum_{\ell \geq k} {\ell \choose k} \beta^{\ell-k}x^k Z_{n,r,\ell}(r\alpha+1) \right.\\
&\left. + \beta \sum_{\ell \geq k} \sum_{u = \ell-k}^{\ell-1} {u \choose \ell-k} \alpha\beta^{\ell-k} x^k Z_{n,r,\ell}\right)\\
& = x\alpha Z_{n,r}(x) + Z_{n,r-1}(x+\beta)+ \beta(r\alpha+1) Z_{n,r}(x+\beta) + \alpha\beta\sum_{k \geq 0}\sum_{\ell \geq k} {\ell \choose k-1}\beta^{\ell-k}x^k Z_{n,r,\ell}\\
&= x\alpha Z_{n,r}(x) + Z_{n,r-1}(x+\beta) + (r\alpha\beta+\beta+x\alpha) Z_{n,r}(x+\beta) - x\alpha\sum_{k \geq 0} x^{k-1} Z_{n,r,k-1}.
\end{align*}
which simplifies to Equation \eqref{Z_recursion}. Since $Z_{n+1,r}$ satisfies the desired recursion, we thus obtain that Equation \eqref{x_partition_fn} indeed holds for $N=n+1$, and so our proof is complete.
 \end{proof}
 
 \begin{rem} It would be interesting to obtain a bijective proof for Equation \eqref{partition_fn}.
 \end{rem}

\section{Steady state probabilities of the two-species PASEP}\label{ansatz_sec}

Our main result is the following.

\begin{thm}\label{proba}
Let $W$ be a word in $\{\D, \A, \E\}^n$ that represents a state of the two-species PASEP with exactly $r$ ``light'' particles. 
The stationary probability of state $W$ is
\begin{equation}\label{eq_prob}
\Pr(W) = \frac{1}{\mathcal{Z}_{n,r}} \weight(W),
\end{equation}
where $\weight(W)$ is defined in Definition \ref{weight_word}.
\end{thm}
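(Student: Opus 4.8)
The plan is to deduce the theorem from Uchiyama's Matrix Ansatz (Theorem~\ref{ansatz}). The key structural observation is that the five relations of Theorem~\ref{ansatz} determine the normalized matrix product $\langle w|\prod_{i=1}^n C_{W_i}|v\rangle \big/ \langle w|A^r|v\rangle$ (writing $C_{\D}=D$, $C_{\A}=A$, $C_{\E}=E$) as a single polynomial in $\alpha,\beta,q$ that does \emph{not} depend on the chosen representation: the three bulk relations rewrite $DE$, $DA$, $AE$ in favor of the ``in order'' products $ED$, $AD$, $EA$ plus strictly lower terms, and thus normal-order any word into a combination of monomials $E^aA^bD^c$, after which the two boundary relations collapse the $E$'s and $D$'s to powers of $\tfrac1\alpha$ and $\tfrac1\beta$ and leave $\langle w|A^b|v\rangle$. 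Hence it suffices to produce \emph{one} representation of $D$, $E$, $A$, $\langle w|$, $|v\rangle$ for which (a) all five relations hold and (b) the matrix product equals $\weight(W)$; Theorem~\ref{proba} is then immediate, with $\mathcal{Z}_{n,r}$ identified with $Z_{n,r}$.

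To build such a representation I would read the RAT as a transfer process that appends one letter at a time, exactly as in the derivation of Equations~\eqref{eq_D}--\eqref{eq_E}. By Proposition~\ref{equiv} I may compute $\weight(W)$ on the maximal tiling $\mathcal{T}_{max}(W)$ without loss of generality. I index the rows and columns of $D$, $E$, $A$ by $\mathbb{Z}_{\geq 0}$, the index recording the number of west-strips built so far that do not contain a $\beta$ (the statistic $k$ that governs the partition-function recursion), and I define the $(i,j)$ entry of each matrix to be the weighted count of ways to append the corresponding D-, A-, or E-column and pass from state $i$ to state $j$, with empty tiles weighted by $q$, fillings weighted by $\alpha$ or $\beta$, and the north-/west-line blocking rules of Definition~\ref{RAT_def} enforced. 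Choosing $\langle w|$ to select the empty-word state and $|v\rangle$ to absorb the closing of the diagram, the telescoping product $\langle w|\prod_i C_{W_i}|v\rangle$ reproduces $\sum_{F\in\fil(W,\mathcal{T}_{max})}\wt(F)=\weight(W)$ by construction. Note that $\weight(\A^r)=1$, since the diagram of $\A^r$ is degenerate and carries no tiles, so $\langle w|A^r|v\rangle=1$ and the two normalizations agree automatically.

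The heart of the proof is verifying the five relations for these matrices. The two boundary relations $\langle w|E=\tfrac1\alpha\langle w|$ and $D|v\rangle=\tfrac1\beta|v\rangle$ should follow directly from how a leftmost $\E$ and a rightmost $\D$ interact with the boundary, matching the factor $\alpha^k\beta^\ell$ of Definition~\ref{weight}. The three bulk relations are the main obstacle, but they are also conceptually the crispest part: each of $DE=D+E+qED$, $DA=A+qAD$, $AE=A+qEA$ corresponds to one of the three tile types DE, DA, AE and to the associated adjacent-swap transition of the process, and each should be checked entry-by-entry as a statement about filling a single pair of adjacent columns. I expect the $q(\cdot)$ term in each relation to come precisely from the empty, $q$-weighted tile of the ``swapped'' (maximal-hexagon) configuration, and the remaining summands to come from the boundary-absorbing fillings, so that verifying the relation reduces to a binomial identity of the same flavor as those appearing in the proof of Equation~\eqref{x_partition_fn}.

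Once both (a) and (b) hold, Theorem~\ref{ansatz} yields $\Pr(W)=\weight(W)/Z_{n,r}$, and summing $\weight(W)$ over all $W\in\{\D,\A,\E\}^n$ with $r$ letters $\A$ identifies $Z_{n,r}=\mathcal{Z}_{n,r}$, which is Equation~\eqref{eq_prob}. The part I expect to require the most care is keeping the transfer-matrix indexing consistent across the three letter types --- in particular reconciling the $\A$-case, which inserts a column of DA tiles and shifts the free-west-strip count by a binomial amount, with the $\E$-case, which inserts both DE and AE tiles and interacts with the north-line rule --- and making sure the $q$-weighting of empty tiles is accounted for exactly once in each entry, so that the bulk relations balance.
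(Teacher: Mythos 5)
Your overall strategy---exhibit one explicit representation of $D$, $E$, $A$, $\langle w|$, $|v\rangle$ built as a combinatorial transfer process on RAT columns, verify the Uchiyama relations for it, and then invoke Theorem~\ref{ansatz}---is a legitimate and genuinely different route from the paper's, but as written it has a gap at exactly the step you defer as ``the heart of the proof.'' The matrices are never defined and none of the five relations is verified; more importantly, the state space you propose (a single index recording the number of west-strips not containing a $\beta$) is not sufficient to define a consistent transfer matrix for general $q$. When an $\E$ is appended, the new column is a single north-strip containing one DE tile per preceding $\D$ and one AE tile per preceding $\A$; an AE tile contributes weight $1$ if it lies above the unique $\alpha$ of that strip (north line) and weight $q$ if it lies below it, so the total weight of the column fillings depends on the \emph{interleaving} of the AE tiles with the $\beta$-free DE tiles, not merely on their counts. (In the one-species case this problem disappears because blocked tiles contribute $1$ on both sides of the $\alpha$, which is why the analogous construction works for ordinary alternative tableaux; here it only collapses at $q=1$, which is precisely why the recursion \eqref{eq_D}--\eqref{eq_E} in the paper is carried out at $q=1$ only.) So either the index set must be enlarged to record the full ordered pattern of free west-strips and $\A$-positions---at which point ``checking the relations entry-by-entry'' is no longer a binomial identity but essentially the full combinatorial argument---or the claimed reduction fails. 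There are also two smaller unaddressed points: Theorem~\ref{ansatz} demands the \emph{exact} operator identities $DE=D+E+qED$, etc., which is strictly stronger than what the combinatorics naturally provides, and the boundary relations hold for $\weight$ only after a rescaling (one finds $\weight(\E Y)=\beta\,\weight(Y)$, not $\tfrac1\alpha\weight(Y)$), which must be absorbed consistently into all three matrices.

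The paper sidesteps all of this by proving and using the weaker, ``sandwiched'' Matrix Ansatz of Theorem~\ref{ansatz2} with $\lambda=\alpha\beta$: it never constructs matrices at all, but instead shows by induction on the area of $\Gamma(W)$ that $\weight$ itself satisfies the five sandwiched recurrences, e.g.\ $\weight(X\D\E Y)=\alpha\beta\,\weight(X\D Y)+\alpha\beta\,\weight(X\E Y)+q\,\weight(X\E\D Y)$, via a local surgery at a chosen corner tile (removing a north-strip, a west-strip, or a single $q$-tile according to its content), with Proposition~\ref{equiv} guaranteeing the freedom to pick a convenient tiling. If you want to salvage your approach, the most direct fix is to replace Theorem~\ref{ansatz} by Theorem~\ref{ansatz2} and prove these corner recurrences for $\weight$ directly---but that \emph{is} the paper's proof, so the representation-theoretic packaging would then be buying you nothing.
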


\begin{figure}[h]
\centering
\includegraphics[width=0.8\textwidth]{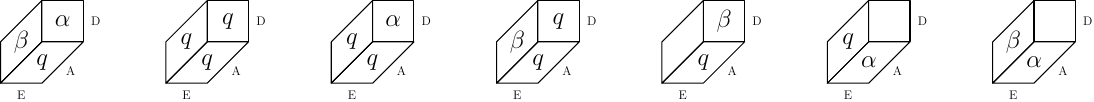}
\caption{All fillings for the minimal tiling of a rhombic diagram of type DAE.}
\noindent
\label{DAE_example}
\end{figure}

\begin{example}
All seven fillings of the minimal tiling of a rhombic diagram corresponding to the word DAE are shown in Figure \ref{DAE_example}. From the sum of the weights of these fillings, we obtain
\[
\Pr(\D\A\E) = \frac{1}{\mathcal{Z}_{3,1}} \left( q^3+\alpha q^2 + \alpha q + \beta q^2 + \beta q + \alpha\beta + \alpha \beta q \right).
\]
\end{example}

To facilitate our proof, we provide a more flexible Matrix Ansatz that generalizes Theorem \ref{ansatz} with the same argument as in an analogous proof for the ordinary PASEP of Corteel and Williams \cite[Theorem 5.2]{CW11}. 

\begin{thm}\label{ansatz2}
Let $f(W)$ be a function on words in the alphabet $\{\D, \A, \E\}$ and $\lambda$ a constant such that for any 
words $X$ and $Y$ in $\{\D, \A, \E\}$, the following conditions are satisfied:
\begin{enumerate}[(I)]
\item $f(X\D\E Y)-q f(X\E\D Y)=\lambda (f( X\D Y)+f(X\E Y))$,
\item $f(X\D\A Y)-q f(X\A\D Y)=\lambda f( X\A Y)$,
\item $f(X\A\E Y)-q f(X\E\A Y)=\lambda f( X\A Y)$,
\item $\beta f(X\D)=\lambda f( X)$,
\item $\alpha f(\E Y)=\lambda f( Y)$.
\end{enumerate} 
Then for any state $W$ of the two-species PASEP of length $n$ with $r$ ``light'' particles,
\[
\Pr(W)= f(W)/Z_{n,r}
\] 
for $Z_{n,r} = \sum_{X}f(X)$ where the sum is over words $X\in\{\D,\A,\E\}^n$ with exactly $r$ $\A$'s.
\end{thm}

\begin{proof}
The proof of Theorem \ref{ansatz2} follows exactly that of \cite[Theorem 5.2]{CW11}.\end{proof}

\begin{rem} Note that the above implies that when the matrix product representation for $D, A, E$ is invoked such that we set $f(X)=\langle w| X |v\rangle$ with $\langle w| |v\rangle=1$, we obtain
\[
Z_{n,r}= [y^r]\langle w| (D+yA+E)^n | v \rangle.
\]
\end{rem}

\subsection{Proof of the main theorem}

\begin{proof}[Proof of \ref{proba}]

Theorem \ref{ansatz2} implies that the steady state probabilities for the two-species PASEP satisfy certain recurrences (that in turn determine all probabilities). The strategy of our proof is to show that the weight generating function for RAT of fixed type satisfies the same recurrences. Specifically, we use these recurrences with the constant $\lambda =\alpha\beta$, to show by induction that for $W$ a word in $\{\D, \A, \E\}^n$ with $r$ $\A$'s, 
\begin{equation}\label{prob}
\weight(W)=\frac{f(W)}{f(\A^r)}.
\end{equation}
The induction is on the area $M$ of our tableaux.

To start, if the area of $\Gamma(W)$ is 0, then necessarily $W=\E^{\ell}\A^r \D^k$ for some $\ell$ and $k$. For every such case, there is a single tiling of $\Gamma(W)$ and a single RAT on that tiling, and both of these are trivial (i.e.\ there are zero tiles to be filled.) We obtain $\weight(\E^{\ell} \A^r \D^k)=\beta^{\ell}\alpha^k$. From Theorem \ref{ansatz2}, we obtain $f(\E^{\ell} \A^r \D^k)=\beta^{\ell}\alpha^k f(\A^r)$, trivially satisfying Equation \eqref{eq_prob}.

Now suppose that any word of area $M<m$ satisfies Equation \eqref{eq_prob}. Let $W$ be a word of length $n$ with $r$ $\A$'s, such that $\Gamma(W)$ has area $m$. Outside of the base case, we assume that at least one of the following must occur:
\begin{enumerate}[i.]
\item $W$ contains an instance of ``DE''.
\item $W$ contains an instance of ``DA''.
\item $W$ contains an instance of ``AE''.
\end{enumerate} 
Based on the occurrence of one of the above, we will express $\weight(W)$ in terms of the weight of some other words whose rhombic diagrams have areas smaller than $m$. Throughout the following, we let $X$ and $Y$ represent some arbitrary words in $\{\D, \A, \E\}$, and we let $T$ represent a RAT of type $W$.

\emph{(i.) $W$ contains an instance of ``$\D\E$''.} We write $W=X\D\E Y$, and suppose $W$ contains $r$ $\A$'s. We can choose an arbitrary tiling $\mathcal{T}$ of $\Gamma(W)$, since any such tiling will contain a DE tile adjacent to the chosen $\D\E$ edges. We call this DE corner tile the \emph{chosen corner}. Let $T \in \fil(W,\mathcal{T})$. The chosen corner of $T$ must contain either an $\alpha$, a $\beta$, or a $q$, so we can decompose the possible fillings of $T$ into three cases. 

If the chosen corner contains an $\alpha$, then all the tiles above it in the same north-strip are empty, and so its entire north-strip has no effect on the rest of the tableau. Thus such $T$ can be mapped to a filling of a smaller RAT on tiling $\mathcal{T}'$ with that north-strip removed, which would have type $X\D Y$ (similar to the example in Figure \ref{ansatz_} (a)). It is easy to check that this operation results in a valid tableau, since any two symbols in the same west-strip of $T$ remain in the same relative position in a west-strip of $T'$. (And similarly for the north-strips, save for the one that was removed). This map gives a bijection between tableaux of type $X\D\E Y$ on tiling $\mathcal{T}$ with an $\alpha$ in the chosen DE corner and tableaux of type $X\D Y$ on tiling $\mathcal{T}'$. The removed column with the $\alpha$ in its bottom-most box has total weight $\alpha\beta$.\footnote{In the total weight of a column, we include the weight of the bottom-most edge, which is a component of the southeast boundary of $T$. When the column removed is an E-column, the weight of the boundary component is $\beta$, so the total weight of the column with an $\alpha$ at the bottom is $\alpha\beta$. Similar reasoning is used in the other cases.} 

Similarly, if the chosen corner contains a $\beta$, then the tiles to its left in the same west-strip must be empty, and so its entire west-strip has no effect on the rest of the tableau. Hence such $T$ can be mapped to a smaller RAT on tiling $\mathcal{T}''$ with that west-strip removed, which would have type $X\E Y$ as in Figure \ref{ansatz_} (b). This map gives a bijection between tableaux of type $X\D\E Y$ on tiling $\mathcal{T}$ with a $\beta$ in the chosen DE corner and tableaux of type $X\E Y$ on tiling $\mathcal{T}''$. The removed west-strip with the $\beta$ in its right-most tile also has total weight $\alpha\beta$. 

Finally, if the chosen corner contains a $q$, then this tile has no effect on the rest of the tableau. Hence such $T$ can be mapped to a RAT of area $m-1$ on tiling $\mathcal{T}'''$ with that DE corner tile removed, which would have type $X\E\D Y$ (similar to the example in Figure \ref{ansatz_} (c)). This map gives a bijection between tableaux of type $X\D\E Y$ on tiling $\mathcal{T}$ with a $q$ in the chosen DE corner and tableaux of type $X\E\D Y$ on tiling $\mathcal{T}'''$. The removed tile with the $q$ has total weight $q$. 

Consequently, we have the sum of the weights of the fillings:
\begin{displaymath} \weight(X\D\E Y) = \weight(X\D Y) \cdot \alpha\beta + \weight(X\E Y) \cdot \alpha\beta + q \weight(X\E\D Y). \end{displaymath}
By the induction hypothesis, since the areas of $\Gamma(X\D Y)$, $\Gamma(X\E Y)$, and $\Gamma(X\E\D Y)$ are all strictly smaller than $m$, we have $\weight(X\D Y) = f(X \D Y)/f(\A^r)$, $\weight(X\E Y) = f(X\E Y)/f(\A^r)$, and $\weight(X\E\D Y) = f(X\E\D Y)/f(\A^r)$. Thus we obtain
\[
(\alpha\beta) \Big(f(X \D Y)+f(X\E Y)+ q f(X \E\D Y)\Big)/f(\A^r) = f(X \D\E Y)/f(\A^r) = f(W)/f(\A^r).
\]
Hence by Theorem \ref{ansatz2} with $\lambda=\alpha\beta$, it follows that $W$ satisfies Equation \eqref{prob}.

\begin{figure}
\centering
\includegraphics[width=\textwidth]{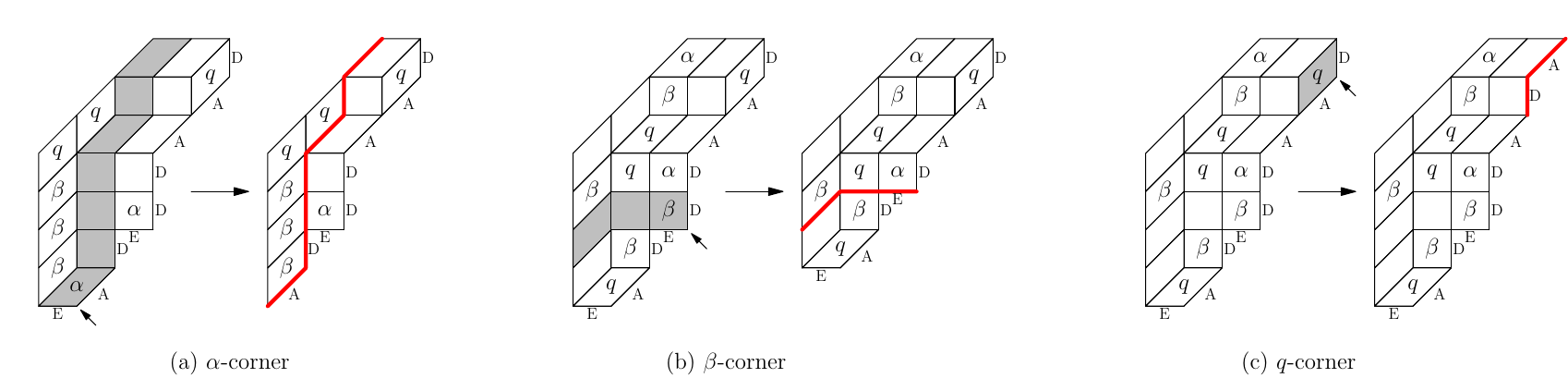}
\caption{(a) $X \A\E Y \mapsto X \A Y$, (b) $X \D\E Y \mapsto X\D Y$, and (c) $X\D\A Y \mapsto q X \A\D Y$.}
\noindent
\label{ansatz_}
\end{figure}

\emph{(ii.) $W$ contains an instance of ``$\A\E$''.} We write $W=X\A\E Y$. We choose tiling $\mathcal{T}$ of $\Gamma(W)$ such that there is an AE tile adjacent to the chosen $\A\E$ edges (and we allow the rest of the tiling to be arbitrary). We call this AE corner tile the \emph{chosen corner}. Let $T \in \fil(W,\mathcal{T})$. The chosen corner of $T$ must contain either an $\alpha$ or a $q$.

If the chosen corner contains an $\alpha$, then the tiles above it in the same north-strip must be empty, and so its entire north-strip has no effect on the rest of the tableau. Hence such $T$ can be mapped to a smaller RAT on tiling $\mathcal{T}'$ with that north-strip removed, which would have type $X\A Y$, as in Figure \ref{ansatz_} (a). This map gives a bijection between tableaux of type $X\A\E Y$ on tiling $\mathcal{T}$ with an $\alpha$ in the chosen AE corner and tableaux of type $X\A Y$ on tiling $\mathcal{T}'$. The removed north-strip with the $\alpha$ in its bottom-most tile has total weight $\alpha\beta$. 

On the other hand, if the chosen corner contains a $q$, then this tile has no effect on the rest of the tableau. Hence such $T$ can be mapped to a RAT of area $m-1$ on tiling $\mathcal{T}''$ with that AE corner tile removed, which would have type $X\E\A Y$ (similar to the example in Figure \ref{ansatz_} (c)). This map gives a bijection between tableaux of type $X\A\E Y$ on tiling $\mathcal{T}$ with a $q$ in the chosen AE corner and tableaux of type $X\E\A Y$ on tiling $\mathcal{T}''$. The removed tile with the $q$ has total weight $q$. Thus we obtain the sum of the weights of the fillings:
\begin{displaymath} \weight(X\A\E Y) = \weight(X\A Y) \cdot \alpha\beta + q \weight(X\E\A Y).\end{displaymath}
Similar reasoning to the DE case completes the argument.

\emph{(iii.) $W$ contains an instance of ``$\D\A$''.} We write $W=X\D\A Y$. We choose tiling $\mathcal{T}$ of $\Gamma(W)$ such that there is a DA tile adjacent to the chosen $\D\A$ edges (and we allow the rest of the tiling to be arbitrary). We call this DA corner tile the \emph{chosen corner}. Let $T \in \fil(W,\mathcal{T})$. The chosen corner of $T$ must contain either a $\beta$ or a $q$.

If the chosen corner contains a $\beta$, then the tiles to its left in the same west-strip must be empty, and so its entire west-strip has no effect on the rest of the tableau. Hence such $T$ can be mapped to a smaller RAT on tiling $\mathcal{T}'$ with that west-strip removed, which would have type $X\A Y$ (similar to the example in Figure \ref{ansatz_} (b)). This map gives a bijection between tableaux of type $X\D\A Y$ on tiling $\mathcal{T}$ with a $\beta$ in the chosen DA corner and tableaux of type $X\A Y$ on tiling $\mathcal{T}'$. The removed west-strip with the $\beta$ in its right-most tile has total weight $\alpha\beta$. 

On the other hand, if the chosen corner contains a $q$, then this tile has no effect on the rest of the tableau. Hence such $T$ can be mapped to a RAT of area $m-1$ on tiling $\mathcal{T}''$ with that DA corner tile removed, which would have type $X\A\D Y$, as in Figure \ref{ansatz_} (c). This map gives a bijection between tableaux of type $X\D\A Y$ on tiling $\mathcal{T}$ with a $q$ in the chosen DA corner and tableaux of type $X\A\D Y$ on tiling $\mathcal{T}''$. The removed tile with the $q$ has total weight $q$. Thus we obtain the sum of the weights of the fillings:
\begin{displaymath} \weight(X\D\A Y) = \weight(X\A Y) \cdot \alpha\beta + q \weight(X\A\D Y).\end{displaymath}
Similar reasoning to the DE case completes the argument.
 
From the above cases, we obtain that for any $M$, any word $W$ with $\Gamma(W)$ of area $M$ satisfies Equation \eqref{prob}, which is the desired result.
\end{proof}

An independent proof of Theorem \ref{proba} can be obtained by constructing a Markov chain on the rhombic alternative tableaux that projects to the two-species PASEP, and is featured in a forthcoming paper by the first author.


\subsection{Complements: the two-species PASEP algebra and the cellular ansatz.} 

It would be possible to give another proof of Theorem \ref{proba} in the spirit of the general theory called \emph{cellular ansatz}, introduced and developed by the second author in \cite{course}.

In the simple case $r=0$ of the PASEP, the Matrix Ansatz defines an algebra with generators $\D$ and $\E$, with relation $\D\E=q\E\D+\E+\D$. In this algebra, any word $W$ with letters $\D$ and $\E$ can be written in a unique way as a sum of monomials $q^t\D^i\E^j$. 

The proof relies on a \emph{planarization} of the \emph{rewriting rules} $\D\E \mapsto q\E\D$, $\D\E \mapsto \E$, and $\D\E \mapsto \D$ (see an example on slides 25-50 of Chapter 3a of \cite{course}). In this context, alternative tableaux appear naturally. We obtain an identity expressing the word $W$ as $\sum_{T}\wt(T)$, where the sum is over alternative tableaux $T$, and $\wt(T)$ is a certain monomial of the form $q^t\D^i\E^j$, where $i$, $j$, and $t$ are defined from the tableau $T$ (see slide 59 of Chapter 3a of \cite{course}). By applying the Matrix Ansatz for the PASEP, we get immediately the interpretation of the stationary probabilities in terms of alternative tableaux (slide 60 of Chapter 3a of \cite{course}).

The general theory of the cellular ansatz works with some family of quadratic algebras $\mathcal{Q}$, having two families of generators, with some commutations relations. Any word $W$ in those generators can be expressed as a sum of monomials over generalized tableaux called \emph{complete} $\mathcal{Q}$-\emph{tableaux}, in bijection with $ \mathcal{Q}$-\emph{tableaux} (see Chapter 6a, slides 41-46, 54-56 of \cite{course}). 

The Matrix Ansatz for the two-species PASEP defines an algebra with three generators $\D, \E, \A$ and three commutation relations 
$\D\E = q\E\D+\E+\D$, $\D\A = q\A\D+\A$, and $\A\E = q\E\A+\A $. This quadratic algebra does not quite fit in the general cellular ansatz theory of Chapter 6a of \cite{course}, but the theory can be extended to such an algebra, by replacing the quadratic lattice by a tiling $\mathcal{T}$ of the diagram $\Gamma(W)$. The corresponding $\mathcal{Q}$-\emph{tableaux} are the RAT and, in a similar way, one can prove that any word $W$ in letters $\{\D, \A, \E\}$ can be expressed in a unique way as a sum of monomials $q^t\D^i\A^m\E^j$.

More precisely we have the following identity
\[
W=\sum_{F \in \fil(W,\mathcal{T})} q^t\E^i\A^m\D^j,
\]
where $i$ (respectively $j$) is the number of north-strips (respectively west-strips) of $F$ not containing an $\alpha$ (respectively $\beta$), and $t$ is the number of cells weighted $q$ as in the definition of $\wt(F)$ in Section \ref{tat_def}. Note that the weight $\wt(F)$ defined in Definition \ref{weight} is equal to the monomial $q^t\alpha^{n-r-i}\beta^{n-r-j}$ where $n$ is the length of $W$ and $r$ is the number of $\A$'s it contains.

Applying to the above the two-species Matrix Ansatz, we obtain immediately Theorem \ref{proba}.


\section{Bijection from RAT with $q=0$ to ``multi-Catalan tableaux''}\label{q0_sec}

The two-species PASEP for the case $q=0$ was studied in a previous paper \cite{M16}, and certain tableaux called ``multi-Catalan tableaux'' were found to give a combinatorial interpretation for the steady state probabilities of this process. A multi-Catalan tableau is a tableau of the same flavor as the alternative tableaux, except with some additional conditions on its rows and columns, which can be labeled as a D-row or A-row and as an E-column or A-column respectively. More precisely, the multi-Catalan tableaux are defined as follows.

\begin{defn} \label{MCT_def}
A \emph{multi-Catalan tableau} of \textbf{size} $(n,r,k)$ is a Young diagram $Y=Y(T)$ with at least $r$ inner corners (i.e. consecutive west- and south-edges on the southwest boundary), that is justified to the northeast and contained in a rectangle of size $(k+r) \times (n-k)$. $Y$ is identified with the lattice path $L=L(T)$ that takes the steps south and west and follows the southeast boundary of $Y$. In addition, we have the following: 
\begin{itemize}
\item Each boundary edge of $L$ is labelled with a D, E, or A such that exactly $r$ inner corners have both edges labeled with $\A$'s, and the remaining west edges have the label E, and the remaining south edges have the label D. 
\item An E-column (respectively A-column) is a column with an E (respectively A) labeling its bottom-most edge.
\item A D-row (respectively A-row) is a row with a D (respectively A) labeling its right-most edge.
\item A DE box is a box in a D-row and an E-column. (The DA, AE, AA boxes are defined correspondingly.)
\end{itemize}
Finally, we fill $T$ with $\alpha$'s and $\beta$'s according to the following rules:
\begin{enumerate}[i.]
\item A box in the same row and left of a $\beta$ must be empty.
\item A box in the same column and above of an $\alpha$ must be empty.
\item A DE box that is not forced to be empty must contain an $\alpha$ or a $\beta$.
\item A DA box that is not forced to be empty must contain a $\beta$.
\item An AE box that is not forced to be empty must contain an $\alpha$.
\end{enumerate}
\end{defn}

\vspace{0.3in}
\begin{rem} 
It is easy to check that any box of the multi-Catalan tableau that has some A-row below it or some A-column to its left must be empty.
\end{rem} 

\begin{figure}[h]
\centering
\includegraphics[width=\textwidth]{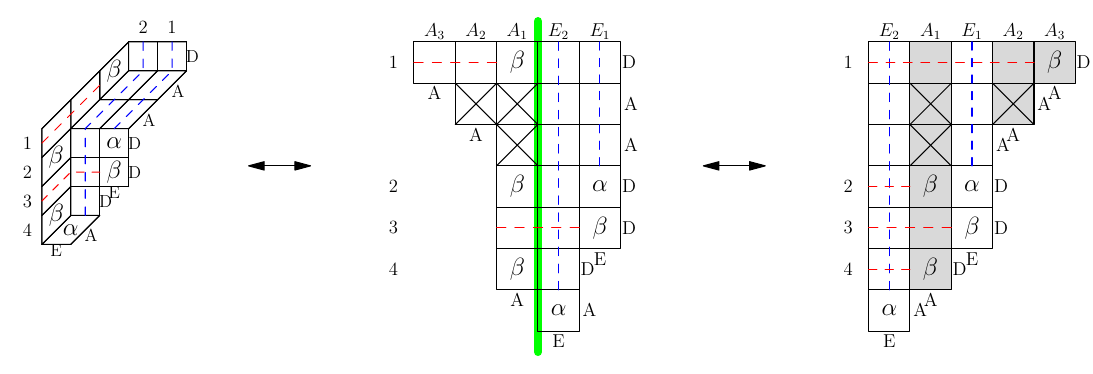}
\caption{Bijection from a RAT of type DAADDEDAE with the minimal tiling $\mathcal{T}_{min}$ to the ``unraveled'' tableau to a multi-Catalan tableau of the same type. Here the north-strips of the RAT correspond to the E-columns of the multi-Catalan tableau, and the west-strips of the RAT correspond to the D-rows of the multi-Catalan tableau. The grey columns of the multi-Catalan tableau are the A-columns that were taken from the left of the ``unraveled'' tableau, and placed in the appropriate location in between the E-columns of the multi-Catalan tableau.}
\noindent
\label{MCT_bijection}
\end{figure}

In Definition \ref{MCT_def}, when we refer to the symbol that a box ``sees'' to its right or below, we mean the first symbol encountered in the same row or column, respectively. For example, in the third tableau of Figure \ref{MCT_bijection}, $\beta$ is the first symbol that the $\beta$ in the row labeled ``2'' sees below it.

\begin{defn}\label{weight}
We call the \emph{weight} $\wt_{MCT}(T)$ of a multi-Catalan tableau $T$ the product of all the $\alpha$'s, $\beta$'s it contains times $\alpha^k \beta^{n-k-r}$ where $(n,r,k)$ is the size of the tableau.
\end{defn}

\begin{defn}
The \emph{type} of the tableau $T$ is the word in $\{\D, \A, \E\}$ that is obtained by reading the southeast boundary of $L(T)$ from northeast to southwest, and reading a $\D$ for each D-labeled south edge, an $\E$ for each E-labeled west edge, and an $\A$ for each ``inner corner'' with a pair of A-labelled edges.
\end{defn}

Let $\MCT(W)$ be the set of multi-Catalan tableaux of type $W$, and $\MCT(n,r)$ be the set of multi-Catalan tableaux of size $(n,r)$. Then the following result holds for the multi-Catalan tableaux.

\begin{thm}[\cite{M16}]
Let $W$ be a word in $\{\D, \A, \E\}^n$ that represents a state of the two-species PASEP for $q=0$ with exactly $r$ ``light'' particles. Let 
\[
\mathcal{Z}^0_{n,r} = \sum_{T \in \MCT(n,r)} \wt_{MCT}(T)
\]
Then the stationary probability of state $W$ is
\[
\Pr(W) = \frac{1}{\mathcal{Z}^0_{n,r}} \sum_{T \in \MCT(W)} \wt_{MCT}(T).
\]
\end{thm}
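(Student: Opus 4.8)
The plan is to leverage the already-proved Theorem \ref{proba}, which gives $\Pr(W) = \weight(W)/\mathcal{Z}_{n,r}$ for \emph{every} value of $q$, and in particular at $q=0$. Specializing there, it suffices to exhibit a weight-preserving bijection $\Phi$ between the rhombic alternative tableaux of type $W$ and the multi-Catalan tableaux in $\MCT(W)$: such a bijection immediately yields $\weight(W)|_{q=0} = \sum_{T\in\MCT(W)}\wt_{MCT}(T)$ for every $W$, and summing over all $W$ of size $(n,r)$ then identifies $\mathcal{Z}_{n,r}|_{q=0}$ with $\mathcal{Z}^0_{n,r}$, giving the claimed probability. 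The first observation is that at $q=0$ only those fillings survive in which every tile carries an $\alpha$, a $\beta$, or a north/west line, since by Definition \ref{weight} any other tile receives a factor $q$ which vanishes. By Proposition \ref{equiv} I am free to compute $\weight(W)$ on the minimal tiling $\mathcal{T}_{min}$, so I would set up $\Phi$ on the set of such $q$-free fillings in $\fil(W,\mathcal{T}_{min})$.

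Next I would describe $\Phi$ as the ``unraveling'' indicated in Figure \ref{MCT_bijection}. Geometrically, each A-edge (a southwest diagonal) of the rhombic diagram is opened into an inner corner consisting of one south and one west edge, both labeled $\A$; this straightens the triangular-lattice boundary of $\Gamma(W)$ into the staircase boundary $L$ of a Young diagram with exactly $r$ inner corners, one per $\A$ of $W$. Under this deformation the three parallelogram tiles become the three square box types: a DE tile becomes a DE box, a DA tile a DA box, and an AE tile an AE box, each carrying its $\alpha$/$\beta$/empty contents unchanged. Because the north-strips of the RAT are exactly the maximal runs of tiles glued along E-edges and the west-strips the maximal runs glued along D-edges, they become respectively the E-columns and the D-rows of the unraveled tableau. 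Finally, since the northwest path $P_2$ groups all $r$ A-edges together, the unraveled picture carries its A-columns in one block on the left; the last step of $\Phi$ slides each A-column rightward into the position dictated by the order of the letters of $W$, producing a genuine multi-Catalan diagram of type $W$.

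To verify that $\Phi$ is well defined and bijective I would check that the five filling constraints match up under the correspondences north-strip $\leftrightarrow$ E-column and west-strip $\leftrightarrow$ D-row. Conditions (i)--(iii) of Definition \ref{RAT_def} specifying which symbol a DE, DA, or AE tile may hold are literally conditions (iii)--(v) of Definition \ref{MCT_def}; condition (iv), that a tile above an $\alpha$ in its north-strip is empty, becomes the MCT ``above an $\alpha$'' rule (ii), and condition (v) becomes the ``left of a $\beta$'' rule (i). The forced emptiness of any RAT tile crossed by a north or west line is exactly the forced emptiness of the MCT boxes having an A-column to the left or an A-row below, so the $q$-free RAT fillings are precisely the images of valid multi-Catalan fillings, and $\Phi$ is invertible by reversing the unraveling and the A-column slide. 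Weight preservation is then immediate: the $q$-free RAT weight is $\alpha^k\beta^{\ell}$ times the product of interior symbols, and since $\ell = n-k-r$ this equals the MCT weight $\alpha^k\beta^{n-k-r}$ times the same product.

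I expect the main obstacle to be the A-column repositioning step: one must prove that sliding the block of A-columns to the positions prescribed by $W$ yields an honest Young diagram (northeast-justified and contained in the $(k+r)\times(n-k)$ rectangle) and that the slide neither creates nor destroys the forced-empty cells governed by the north and west lines. Concretely this requires tracking how an A-edge of $P_2$, which sits between the E-block and the D-block, is transported to its true location among the letters of $W$, and checking that the relative order of symbols within each north-strip and west-strip---hence within each column and row---is preserved, so that the ``first symbol seen below/to the left'' reading used in Definition \ref{MCT_def} agrees with the RAT line conditions. Once this structural compatibility is established, the remaining verifications are routine.
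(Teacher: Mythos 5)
Your argument is correct in outline, but it takes a different route from the paper: the paper does not prove this theorem at all here --- it is quoted from \cite{multicatalan}, where it was established directly for the two-species TASEP --- whereas you derive it as a corollary of Theorem \ref{proba} specialized at $q=0$ together with a weight-preserving bijection between $q$-free RAT fillings and multi-Catalan tableaux. That bijection is exactly the Proposition the paper proves immediately \emph{after} stating this theorem (the ``unraveling'' of $\mathcal{T}_{min}$ in Figure \ref{MCT_bijection}), so you are assembling the paper's own ingredients into a proof it leaves implicit; the payoff of your route is that it makes the $q=0$ specialization of the new machinery genuinely recover the old result, at the cost of resting on the heavier Theorem \ref{proba}. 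Two small cautions on the bijection itself. First, the paper's step of inserting the $A_i$-columns is not a pure slide with ``contents unchanged'': any $\beta$ sitting in an $A_i$-column is then moved to the right-most empty box of its row, so the symbol's box does change even though its row (west-strip) does not; you flag the repositioning as the delicate point, but you should state this relocation explicitly, since it is what makes the ``first symbol seen to the left'' reading in Definition \ref{MCT_def} agree with the west-line condition. Second, your sentence identifying the line-crossed RAT tiles with the MCT boxes having an A-column to the left or an A-row below conflates two different things: the north/west lines correspond to the ``above an $\alpha$'' and ``left of a $\beta$'' rules (which you had already matched correctly in the preceding sentence), while the A-row/A-column emptiness is the derived fact recorded in the Remark after Definition \ref{MCT_def}. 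Neither point is a fatal gap, but both need to be cleaned up for the verification of the five filling conditions to go through.
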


We now provide the connection from RAT with $q=0$ to the multi-Catalan tableaux.

\begin{prop}
Let $W$ be a word in $\{\D, \A, \E\}$ that represents a state of the two-species PASEP. The multi-Catalan tableaux of type $W$ are in bijection with the rhombic alternative tableaux with $q=0$ on $\Gamma(W)$ with some arbitrary fixed tiling. 
\end{prop}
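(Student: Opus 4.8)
The plan is to construct an explicit bijection by geometrically ``unraveling'' the rhombic diagram into a Young diagram, following the route suggested by Figure \ref{MCT_bijection}. First I would fix the tiling to be the minimal tiling $\mathcal{T}_{min}$ of $\Gamma(W)$; by Proposition \ref{equiv} the choice of tiling does not affect the weight generating function, so it suffices to exhibit the bijection for one fixed tiling. Next I would pin down what ``$q=0$'' means for a RAT: since a $q$ is placed in every empty tile off the north and west lines, a filling has nonzero weight at $q=0$ exactly when every tile either contains an $\alpha$ or a $\beta$ or lies on a north line or a west line, i.e.\ there are no free tiles. This rigidity is precisely what will match the multi-Catalan requirement that every box not forced empty be filled.

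The heart of the argument is the unraveling map. I would set up a tile-by-tile correspondence in which the north-strips of the RAT (indexed from right to left by the E-edges) become the E-columns of the multi-Catalan tableau, and the west-strips (indexed top to bottom by the D-edges) become the D-rows. Under this correspondence a DE tile maps to a DE box, a DA tile to a DA box, and an AE tile to an AE box, so that the southwest A-edges of $\Gamma(W)$ are ``straightened'': each $\A$ of $W$ splits into an A-row (the south part of its A-edges, as carried by DA tiles) and an A-column (the west part, as carried by AE tiles), producing exactly the $r$ inner corners of the resulting Young diagram. The intermediate ``unraveled'' tableau of Figure \ref{MCT_bijection} has all A-columns gathered on the left, and the final step slides each A-column rightward into its correct position among the E-columns as dictated by reading $W$.

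With the map in hand, I would verify validity in both directions. Forward: the image is a Young diagram justified to the northeast, contained in the prescribed rectangle and having $r$ inner corners, and the filling conditions (i)--(v) of Definition \ref{MCT_def} are literally conditions (i)--(v) of Definition \ref{RAT_def} transported through the tile--box dictionary, with the north-line and west-line emptiness conditions (iv)--(v) of the RAT corresponding to the ``forced empty'' conditions (i)--(ii) of the multi-Catalan tableau; the $q=0$ rigidity guarantees no unaccounted empty cells appear. Inverse: given a multi-Catalan tableau, I would recover the north-strips from its E-columns, the west-strips from its D-rows, and the A-edges from its inner corners, ``raveling'' these back onto $\mathcal{T}_{min}$, then check that this yields a valid $q=0$ RAT and is a two-sided inverse. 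Finally, since the $\alpha$'s and $\beta$'s are preserved tile-for-box and the boundary factor $\alpha^{k}\beta^{\ell}$ agrees with $\alpha^{k}\beta^{\,n-k-r}$ (as $\ell=n-k-r$), the bijection is weight-preserving, which is what links it to the steady-state probabilities.

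I expect the main obstacle to be making the geometric unraveling precise: one must show that straightening the southwest A-edges and then relocating the A-columns among the E-columns is well-defined and produces a genuine Young-diagram shape with the correct inner-corner structure, and that the tile adjacencies governing north-strip and west-strip membership (hence the line and emptiness conditions) are faithfully preserved. Once this geometric correspondence is pinned down, the verification of the filling conditions and of weight-preservation is routine bookkeeping.
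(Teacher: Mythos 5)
Your overall route is the same as the paper's: fix the minimal tiling $\mathcal{T}_{min}$ (invoking Proposition \ref{equiv} to reduce to one tiling), unravel the rhombic diagram so that north-strips become E-columns and west-strips become D-rows, gather the A-columns and then reinsert each one among the E-columns in the position dictated by reading $W$, and finally check that the RAT filling conditions at $q=0$ (no tile free of both a symbol and a line) translate into the multi-Catalan conditions. Your reading of what ``$q=0$'' means for a RAT filling, and your weight bookkeeping $\alpha^k\beta^{\ell}=\alpha^k\beta^{n-k-r}$, both match the paper.

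There is, however, one concrete step you omit, and without it the map as you describe it does not land in the set of multi-Catalan tableaux. When an A-column $A_i$ is moved from the far left of the unraveled tableau into its slot between $E_j$ and $E_{j+1}$, the cyclic order of boxes \emph{within each row} changes: DE boxes that were to the right of a DA box containing a $\beta$ may end up to its left, and vice versa. The RAT condition ``every tile to the left of a $\beta$ in its west-strip is empty'' is therefore \emph{not} literally transported by the tile-to-box dictionary once the columns are permuted; a $\beta$ sitting in a relocated DA box can acquire nonempty boxes to its left, violating condition (i) of Definition \ref{MCT_def}, or leave formerly ``forced empty'' DE boxes to its right unaccounted for under conditions (iii)--(v). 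The paper repairs this with an explicit additional rule: for each row, a $\beta$ appearing in an inserted A-column is moved to the right-most possible empty box of that row of the multi-Catalan tableau. This relocation is what makes the filling conditions on the two sides correspond, and it is also the step one must invert when reconstructing the RAT from a multi-Catalan tableau. You should add this adjustment and verify (as the paper does via the three itemized conditions at the end of its proof) that it is reversible; the rest of your argument, including the shape analysis showing that each inserted A-column creates an A-labelled inner corner, is in line with the paper's.
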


\begin{proof}
First, it suffices to prove the Proposition for any one fixed tiling, so we will do so for the minimal tiling $\mathcal{T}_{min}$ of $\Gamma(W)$. Our bijection follows the diagram in Figure \ref{MCT_bijection}. In particular, after labeling the north-strips from right to left, we perform the following: 
\begin{enumerate}
\item ``Unravel'' $\mathcal{T}_{min}$ as in Figure \ref{MCT_bijection} to obtain an ``unraveled'' tableau with its rows labeled D or A corresponding to whether the tiles in those rows came from west-strips or not (and similarly, its columns labeled E or A corresponding to whether the tiles in those rows came from north-strips or not.) Keep all $\alpha$'s and $\beta$'s in the same relative locations after unraveling. 
\item Label the A-columns with $A_1,A_2,\ldots$ in the unraveled tableau from right to left. Label the E-columns in the unraveled tableau with $E_1,E_2,\ldots$ from right to left. Add boxes with X's through them (i.e.\ place-holders) to correspond to squares that are both in an A-row and an A-column.
\item For $i=1,2,\ldots$, take the $A_i$ column and place it between the $E_j$ and $E_{j+1}$ column where if the word $W$ is read from right to left, the $i$'th A is between the $j$'th and $j+1$'st E's. For each row, if there is a $\beta$ in the $A_i$ column, move that $\beta$ to the right-most possible empty box in the multi-Catalan tableau.
\end{enumerate}

We claim that the above operations result in a multi-Catalan tableau, as seen in Figure \ref{MCT_bijection}. We keep track of the following quantities:
\begin{itemize}
\item The length of column $E_j$ (respectively $A_i$) is the number of $\D$'s and $\A$'s in $W$ preceding the $j$'th $\E$ (respectively $\A$).
\item The $i$'th D-row is composed of $e_i$ DE boxes followed by $a_i$ DA boxes (from right to left), where $e_i$ is the number of $\E$'s and $a_i$ is the number of $\A$'s following the $i$'th $\D$ in $W$.
\end{itemize} 

First, we show that the map we defined results in a partition with its southeast boundary edges labeled D, A, and E as needed according to Definition \ref{MCT_def}. In particular, we note that every A-column in the unraveled tableau has an A-row immediately below its last box since the number of DA boxes in consecutive D-rows is equal. This implies that whenever an A-column is inserted into the multi-Catalan tableau, the west A-edge on the boundary necessarily has a south A-edge adjacent to its left, and so ends up forming an A-labelled inner corner, as required for the labeling of the A-edges. Furthermore, it is clear that we obtain a partition after inserting the A-columns due to the interpretation of the lengths of the E-columns and the A-columns.

It remains to show that the three conditions below are true for the multi-Catalan tableau if and only if they are true for the RAT it was mapped from. These are easily verified by carefully considering the rules for the fillings of both the RAT and the multi-Catalan tableaux, as seen in the example in Figure \ref{MCT_bijection}.
\begin{enumerate}
\item For any $\alpha$, every tile above it in the same column is empty, there is no $\beta$ to its right in the same row, and no $\alpha$ below it in the same column.
\item Every $\beta$ ends up in a DE or DA box, every tile to its left in the same row is empty, and there is no $\beta$ to its right in the same row, and no $\alpha$ below it in the same column.
\item Any empty box must have either an $\alpha$ below it in the same column or a $\beta$ to its right in the same row.
\end{enumerate}

We have thus a weight-preserving bijection from a RAT with the minimal tiling to a multi-Catalan tableau, and since all tilings of RAT are in bijection with each other, the Proposition follows.
\end{proof}

Finally, for the multi-Catalan tableaux, which translate to results for RAT with $q=0$, there are some more refined enumerative results from \cite{M16} and also \cite{ALS09, DS05}.
\begin{thm}
The weight generating function for the multi-Catalan tableaux of size $n$ and whose type has $r$ A's is
\[
\mathcal{Z}^0_{n,r}(\alpha,\beta,0) = (\alpha\beta)^{n-r} \sum_{p=1}^{n-r} \frac{2r+p}{2n-p} {2n-p \choose n+r} \frac{\alpha^{-p-1}-\beta^{-p-1}}{\alpha^{-1}-\beta^{-1}}.
\]
\end{thm}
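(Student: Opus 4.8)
The plan is to work entirely on the multi-Catalan side. By the preceding Proposition the two generating functions coincide, and $\wt_{MCT}$ carries no $q$, so it suffices to evaluate $\sum_{T \in \MCT(n,r)} \wt_{MCT}(T)$ and match it to the closed form. The first step is to record the rigidity forced by $q=0$: under the bijection, the surviving RAT are exactly those with no interior tile weighted $q$, so the corresponding multi-Catalan tableaux are \emph{dense} in the sense of condition (3) in the proof of the preceding Proposition — every empty box is forced either by an $\alpha$ below it or a $\beta$ to its right. I would first pin down these dense tableaux concretely: the $\alpha$'s occupy the top of each E- or A-column that carries one, the $\beta$'s occupy the right of each D- or A-row that carries one, and all remaining boxes are then forced. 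Consequently a dense tableau is determined by its boundary path $L(T)$ (hence its shape, together with the $r$ prescribed AA inner corners) and by the single interface along which the column-$\alpha$'s meet the row-$\beta$'s.

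Next I would reorganize the target formula by the statistic $p := (n-r) - (\#\ \text{interior}\ \alpha\text{'s and}\ \beta\text{'s})$, which equals $2(n-r)-(\deg_\alpha+\deg_\beta)$ when $\wt_{MCT}(T)=\alpha^{\deg_\alpha}\beta^{\deg_\beta}$. For fixed $p$ the factor $(\alpha\beta)^{n-r}\,\frac{\alpha^{-p-1}-\beta^{-p-1}}{\alpha^{-1}-\beta^{-1}}$ expands as
\[
\sum_{i=0}^{p}\alpha^{\,n-r-i}\,\beta^{\,n-r-p+i},
\]
a complete homogeneous symmetric polynomial with $p+1$ terms. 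The content to prove is therefore that: (a) the number of dense tableaux of size $(n,r)$ with a prescribed weight $\alpha^{a}\beta^{b}$, where $a+b=2(n-r)-p$, equals the ballot number $\frac{2r+p}{2n-p}\binom{2n-p}{n+r}$; and (b) this number is the same for every admissible split $a+b=2(n-r)-p$. Claim (b) is explained structurally by Step~1: once the shape and the $\alpha/\beta$ occupation pattern are fixed, the only remaining freedom is the location of the single transition between the $\alpha$-region and the $\beta$-region along the meeting interface, which produces exactly the $p+1$ monomials of the symmetric factor (and forces $p\ge 1$, since a nonempty interface is needed — this is why the sum starts at $p=1$ rather than $p=0$).

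For (a) I would set up a bijection between the underlying shapes (equivalently, the boundary paths $L(T)$ with their $r$ AA-corners) and lattice paths with $a=n+r$ up-steps and $b=n-r-p$ down-steps, arranged so that the northeast-justification inside a rectangle together with the requirement of exactly $r$ AA inner corners translates into the condition that every partial sum of the step sequence is strictly positive. Recognizing
\[
\frac{2r+p}{2n-p}\binom{2n-p}{n+r}=\frac{a-b}{a+b}\binom{a+b}{a},\qquad a=n+r,\ b=n-r-p,
\]
the cycle lemma then counts precisely the strictly-positive sequences, giving the ballot number and establishing (a); note $a>b$ holds for all $p\ge 1$ and $b\ge 0$ forces $p\le n-r$, recovering the stated range. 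Summing over $p$ from $1$ to $n-r$ and over the interface split recovers the closed form.

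The main obstacle is the bookkeeping in this last bijection: converting the constraints on a dense multi-Catalan tableau — northeast-justification inside a $(k+r)\times(n-k)$ rectangle, exactly $r$ AA inner corners, and the fully forced filling — into a single clean strict-positivity condition on one step sequence, \emph{uniformly in} $k$, so that the cycle lemma applies with no correction terms. A safer but more computational route, which I would keep in reserve, avoids the bijection entirely: specialize the case analysis in the proof of Theorem~\ref{proba} to $q=0$, where the three relations collapse to $\weight(X\D\E Y)=\alpha\beta\bigl(\weight(X\D Y)+\weight(X\E Y)\bigr)$, $\weight(X\A\E Y)=\alpha\beta\,\weight(X\A Y)$, and $\weight(X\D\A Y)=\alpha\beta\,\weight(X\A Y)$; then sum over all words of size $(n,r)$ to extract a recursion for $\mathcal{Z}^0_{n,r}$ in $n$ and $r$, and verify by induction that the stated formula satisfies it together with the base cases $\mathcal{Z}^0_{n,n}=1$. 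This replaces the delicate path bijection by a routine induction that amounts to a standard ballot-number identity.
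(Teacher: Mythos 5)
First, a point of comparison: the paper itself does not prove this theorem --- it is quoted from \cite{multicatalan} (see also \cite{ayyer, schaeffer}) --- so there is no in-paper argument to measure your proposal against, and it must stand on its own.

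It does not yet stand. The structural claim on which your step (b) rests --- that once the shape and the occupation pattern are fixed, the dense fillings are obtained by sliding a single $\alpha/\beta$ interface and therefore sweep out exactly the $p+1$ monomials $\alpha^{n-r-i}\beta^{n-r-p+i}$ --- is false. Take $W=\D\D\E\E$ (so $n=4$, $r=0$, a $2\times 2$ array of DE boxes). A direct check shows its six $q=0$ fillings have symbol contents $\alpha^2$, $\beta^2$, and (twice each) $\alpha^2\beta$ and $\alpha\beta^2$; there is \emph{no} filling with exactly one $\alpha$ and one $\beta$. Hence this single shape contributes $\alpha^4\beta^2+\alpha^2\beta^4$ at $p=2$ but nothing to $\alpha^3\beta^3$, so the equidistribution over the splits $a+b=2(n-r)-p$ holds only in the aggregate over all words of size $(n,r)$, and your proposal contains no argument for that aggregate statement; the per-shape heuristic cannot be repaired into one. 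The cycle-lemma step (a) is likewise only a plan (you acknowledge the bookkeeping obstacle), and it is aimed at counting boundary paths when what must be counted is pairs (shape, filling) of a prescribed weight. Finally, the fallback via the $q=0$ relations is less routine than you suggest: those relations rewrite one word in terms of specific shorter words, the reduction depends on which instance of $\D\E$, $\D\A$, or $\A\E$ is chosen, and the words $\E^{\ell}\A^{r}\D^{k}$ admit no reduction at all, so summing over words does not by itself yield a recursion in $(n,r)$. To make that route work you would need either explicit $q=0$ matrices satisfying the hypotheses of Theorem \ref{ansatz2}, or a letter-by-letter recursion with an auxiliary statistic in the style of the paper's proof of \eqref{partition_fn} --- and that recursion still has to be found and solved against the closed form.
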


\begin{thm}
 The number of multi-Catalan tableaux of size $n$ and whose type has $r$ $\A$'s is
\[
\mathcal{Z}^0_{n,r}(1,1,0) = \frac{2(r+1)}{n+r+2}{2n+1 \choose n-r}.
\]
\end{thm}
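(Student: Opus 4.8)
Since setting $\alpha=\beta=1$ turns every tableau weight into $1$, the quantity $\mathcal{Z}^0_{n,r}(1,1,0)$ is just the number of multi-Catalan tableaux of size $(n,r)$, and the plan is to obtain it by specializing the preceding closed form. First I would observe that the factor $\frac{\alpha^{-p-1}-\beta^{-p-1}}{\alpha^{-1}-\beta^{-1}}$ is the complete homogeneous symmetric polynomial $\sum_{i=0}^{p}\alpha^{-i}\beta^{-(p-i)}$ in the two variables $\alpha^{-1},\beta^{-1}$, so it specializes to $p+1$ at $\alpha=\beta=1$ (the $p=0$ term contributing $1$), while $(\alpha\beta)^{n-r}\to 1$. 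This reduces the theorem to the single binomial identity
\[
\sum_{p=0}^{n-r}\frac{(2r+p)(p+1)}{2n-p}{2n-p \choose n+r}=\frac{2(r+1)}{n+r+2}{2n+1 \choose n-r}.
\]

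The heart of the argument is then to prove this identity, which I would do by creative telescoping. Rewriting the summand via ${2n-p \choose n+r}=\frac{2n-p}{n-r-p}{2n-p-1 \choose n+r}$ puts it in the form $a_p=\frac{(2r+p)(p+1)}{n-r-p}{2n-p-1 \choose n+r}$, and I would seek a certificate $b_p=c(p){2n-p \choose n+r}$ with $a_p=b_p-b_{p+1}$. Matching degrees shows that the top-degree terms of $c(p)(2n-p)$ and $c(p+1)(n-r-p)$ always cancel, so $c(p)$ may be taken to be a quadratic polynomial in $p$ with leading coefficient $\frac{1}{n+r+2}$; the verification $a_p=b_p-b_{p+1}$ then collapses to the polynomial identity $c(p)(2n-p)-c(p+1)(n-r-p)=(2r+p)(p+1)$ in $p$ (with coefficients depending on $n,r$). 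Since the binomial factor ${2n-p \choose n+r}$ in $b_p$ vanishes at $p=n-r+1$, the telescoping sum collapses to $b_0=c(0){2n \choose n+r}$, and using ${2n+1 \choose n-r}=\frac{2n+1}{n+r+1}{2n \choose n+r}$ one checks $c(0)=\frac{2(r+1)(2n+1)}{(n+r+1)(n+r+2)}$, so that $b_0$ is exactly the right-hand side.

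I expect the main obstacle to be pinning down the quadratic certificate $c(p)$ and verifying the resulting polynomial identity cleanly in the two parameters $n$ and $r$; Gosper's algorithm guarantees that such a $c$ exists because the summand is hypergeometric in $p$, so this is routine, but one must be careful with the boundary term at $p=n-r+1$. As an independent sanity check I would note that the right-hand side is the Catalan-triangle (ballot) number $\frac{a-b+1}{a+1}{a+b \choose b}$ with $a=n+r+1$ and $b=n-r$, which specializes correctly at the extremes: $\mathcal{Z}^0_{n,0}(1,1,0)=\frac{1}{n+2}{2n+2 \choose n+1}$, the $(n+1)$-st Catalan number (recovering the ordinary $q=0$ PASEP), and $\mathcal{Z}^0_{n,n}(1,1,0)=1$, matching the single all-$\A$ state. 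Alternatively, Zeilberger's algorithm produces a recurrence in $n$ (for fixed $r$) annihilating the left-hand sum, and one verifies the closed form satisfies the same recurrence together with these initial values.
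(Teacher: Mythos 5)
The paper does not actually prove this theorem: it is quoted from \cite{multicatalan} (see also \cite{ayyer, schaeffer}), so there is no internal proof to compare against. Your route---specialize the preceding closed form at $\alpha=\beta=1$ and prove the resulting binomial identity by a telescoping certificate---is legitimate, and the certificate does exist: the leading coefficient $\tfrac{1}{n+r+2}$ and the value $c(0)=\tfrac{2(r+1)(2n+1)}{(n+r+1)(n+r+2)}$ check out, and the boundary term vanishes because $\binom{n+r-1}{n+r}=0$. Two caveats, though. First, the displayed formula for $\mathcal{Z}^0_{n,r}(\alpha,\beta,0)$ has its sum starting at $p=1$, while your identity starts at $p=0$; these differ by $\tfrac{r}{n}\binom{2n}{n+r}$, which is nonzero for $r\geq 1$. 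A direct check at $n=2$, $r=1$ (four tableaux, one for each of $\D\A$, $\A\D$, $\A\E$, $\E\A$) gives the count $4$; your $p=0$ sum gives $4$, whereas the $p=1$ sum gives $2$. So the lower limit in the quoted generating-function theorem is a misprint and your version is the correct one, but you should say so explicitly rather than silently changing it, since as written your first step does not literally follow from the printed statement. Second, the rewriting $\binom{2n-p}{n+r}=\tfrac{2n-p}{n-r-p}\binom{2n-p-1}{n+r}$ degenerates at $p=n-r$, so you should verify $a_p=b_p-b_{p+1}$ in the unrewritten form or treat the last term separately. Finally, note that a one-line derivation is available from the last theorem of this section: summing the refined count $\tfrac{r+1}{n+1}\binom{n+1}{k}\binom{n+1}{\ell}$ over $k+\ell=n-r$ gives $\tfrac{r+1}{n+1}\binom{2n+2}{n-r}$ by Vandermonde, and $\binom{2n+2}{n-r}=\tfrac{2n+2}{n+r+2}\binom{2n+1}{n-r}$ turns this into exactly $\tfrac{2(r+1)}{n+r+2}\binom{2n+1}{n-r}$; this avoids the creative-telescoping machinery entirely.
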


\begin{thm}
Let $n \vcentcolon= r+k+\ell$. The number of multi-Catalan tableaux of size $n$ and whose type has $r$ $\A$'s and $k$ $\D$'s is 
\[
\frac{r+1}{n+1}{n+1 \choose k}{n+1 \choose \ell}.
\]
\end{thm}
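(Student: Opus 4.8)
The plan is to pass from the tableaux to non-crossing pairs of lattice paths and then apply the Lindström--Gessel--Viennot (LGV) lemma, so that the product formula drops out as a $2\times 2$ determinant. The algebraic backbone is the identity (valid whenever $n=k+\ell+r$)
\[
\frac{r+1}{n+1}\binom{n+1}{k}\binom{n+1}{\ell} = \binom{n}{k}\binom{n}{\ell} - \binom{n}{k-1}\binom{n}{\ell-1},
\]
which I would verify directly: writing $\binom{n}{k-1}=\binom{n}{k}\tfrac{k}{\ell+r+1}$ and $\binom{n}{\ell-1}=\binom{n}{\ell}\tfrac{\ell}{k+r+1}$, the bracket $1-\tfrac{k\ell}{(\ell+r+1)(k+r+1)}$ collapses to $\tfrac{(r+1)(n+1)}{(\ell+r+1)(k+r+1)}$, which is exactly the factor relating $\binom{n}{k}\binom{n}{\ell}$ to $\tfrac{r+1}{n+1}\binom{n+1}{k}\binom{n+1}{\ell}$. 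Since $\binom{n}{\ell}=\binom{n}{k+r}$ and $\binom{n}{\ell-1}=\binom{n}{k+r+1}$, the right-hand side is $\det\begin{pmatrix}\binom{n}{k}&\binom{n}{k-1}\\ \binom{n}{k+r+1}&\binom{n}{k+r}\end{pmatrix}$, which is precisely the LGV count for monotone East/North paths with sources $A_1=(0,0)$, $A_2=(-(r+1),r+1)$ and sinks $B_1=(k,n-k)$, $B_2=(k-1,n-k+1)$.

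By the bijection between multi-Catalan tableaux and $q=0$ rhombic alternative tableaux proved above, it suffices to count $q=0$ RAT of size $(n,r,k)$, summed over all types $W$. I would biject each such tableau with a vertex-disjoint pair $(\gamma_1,\gamma_2)$ of East/North paths with the sources and sinks just described. The path $\gamma_1$ reads off the type, taking an East step for each $\D$ and a North step for each $\A$ or $\E$, so that it runs from $A_1$ to $B_1$ and accounts for the factor $\binom{n}{k}$. The path $\gamma_2$ encodes the remaining data — the refinement of the non-$\D$ positions into $\A$'s and $\E$'s together with the $\alpha$/$\beta$ filling — using the rigidity of the $q=0$ case: every tile is an $\alpha$, a $\beta$, or carries a north- or west line, so the boundary separating the lined region from the filled region is a lattice staircase that I would take as $\gamma_2$, running from $A_2$ to $B_2$. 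The content of the bijection is that the line-compatibility conditions of Definition \ref{RAT_def} hold if and only if $\gamma_1$ and $\gamma_2$ are vertex-disjoint.

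Granting this bijection, the LGV lemma (cf. the remark accompanying Lemma \ref{flip_connectivity}) evaluates the number of non-crossing pairs as the displayed determinant, and the identity of the first paragraph rewrites it as $\tfrac{r+1}{n+1}\binom{n+1}{k}\binom{n+1}{\ell}$, as claimed; summing over $k$ then recovers the preceding theorem via Vandermonde, since $\sum_k\binom{n+1}{k}\binom{n+1}{n-r-k}=\binom{2n+2}{n-r}$. I expect the main obstacle to be the bijection of the second paragraph: one must define $\gamma_2$ from the $q=0$ filling precisely enough that the filling conditions become equivalent to vertex-disjointness of $(\gamma_1,\gamma_2)$, and check invertibility. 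The delicate point is tracking the statistic $k$, because once $r>0$ the path $\gamma_1$ records only the $\D$-positions and therefore does \emph{not} determine the type, so the $\A$/$\E$ data must be carried entirely by $\gamma_2$ in a way compatible with the non-crossing constraint.

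A more computational alternative, which sidesteps the bijection, is to establish a recursion for the refined count directly from the tableau combinatorics — in the spirit of the inductive proof of \eqref{partition_fn}, by appending a final $\D$, $\A$, or $\E$ and specializing the resulting transfer relations to $q=0$ — and then to verify that the closed form satisfies this recursion with the base case $\E^{\ell}\A^{r}\D^{k}$ of area $0$. This route is more mechanical but avoids proving that non-crossing corresponds exactly to a valid filling.
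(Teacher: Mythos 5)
Note first that the paper itself states this theorem without proof, citing \cite{multicatalan, ayyer, schaeffer}, so there is no internal argument to compare against; your proposal has to stand on its own. Its algebraic frame does: the identity $\frac{r+1}{n+1}\binom{n+1}{k}\binom{n+1}{\ell}=\binom{n}{k}\binom{n}{\ell}-\binom{n}{k-1}\binom{n}{\ell-1}$ for $n=k+\ell+r$ is correct, it equals the $2\times 2$ LGV determinant for your sources $A_1,A_2$ and sinks $B_1,B_2$ (and those endpoints are compatible, so the determinant really counts vertex-disjoint pairs), and your Vandermonde consistency check against the formula $\mathcal{Z}^0_{n,r}(1,1,0)=\frac{2(r+1)}{n+r+2}\binom{2n+1}{n-r}$ works out. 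Reducing to $q=0$ RAT via the bijection of Section \ref{q0_sec} is also legitimate.

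The gap is the bijection in your second paragraph, which you flag but do not construct, and which is the entire combinatorial content of the argument. Two concrete problems. First, your definition of $\gamma_2$ as ``the boundary separating the lined region from the filled region'' is not well defined: in a multi-Catalan tableau the filled cells of a single row need not be contiguous (a row can contain a $\beta$, then to its right some cells forced empty because they lie above an $\alpha$, then further filled cells), so the lined/filled interface is not a single monotone staircase. Second, a counting sanity check shows how much $\gamma_2$ must carry: $\gamma_1$ records only the positions of the $k$ $\D$'s, so $\gamma_2$ --- a single path with only $\binom{n}{k+r}$ possible shapes --- must simultaneously encode which of the remaining $n-k$ letters are $\A$'s \emph{and} the entire $\alpha/\beta$ filling, with the filling conditions of Definition \ref{MCT_def} translating exactly into vertex-disjointness from $\gamma_1$. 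The totals are consistent with such a bijection existing, but nothing in the proposal says how the encoding goes or why it would be invertible, so what you have is a well-motivated plan rather than a proof. Your fallback --- a $k$-refined recursion in the spirit of the inductive proof of Equation \eqref{partition_fn}, specialized to $q=0$, checked against the closed form --- is probably the more tractable route, but it too is only named, not executed.
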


\noindent \textbf{Acknowledgement.} The first author is very grateful to Lauren Williams and Sylvie Corteel for their mentorship and many useful conversations. The first author also thanks LIAFA at Paris Diderot for their hospitality during the creation of this article, as well as the Chateaubriand Fellowship awarded by the Embassy of France in the United States, the Fondation Sciences Math\'{e}matiques de Paris, the France-Berkeley Fund, the NSF grant DMS-1049513, and the NSF grant DMS-1704874 that supported this work.

\end{document}